\DeclareFontFamily{T1}{calligra}{}
\DeclareFontShape{T1}{calligra}{m}{n}{<->s*[1.44]callig15}{}
\DeclareMathAlphabet\mathcalligra   {T1}{calligra} {m} {n}
\DeclareMathAlphabet\mathzapf       {T1}{pzc} {mb} {it}
\DeclareMathAlphabet\mathchorus     {T1}{qzc} {m} {n}
\DeclareMathAlphabet\mathrsfso      {U}{rsfso}{m}{n}
\DeclareMathAlphabet\mathfrcal      {T1}{frcursive}{m}{it}
\DeclareFontFamily{T1}{frcursive}{}
\DeclareFontShape{T1}{frcursive}{m}{n}{<->s*[1.44]callig15}{}
\DeclareMathAlphabet\mathfrcal      {T1}{frcursive}{m}{it}
\numberwithin{equation}{section}
\newcommand{\CP}{\mathds{CP}}
\newcommand{\CC}{\mathds{C}}
\renewcommand{\tilde}{\widetilde}
\newcommand{\tSO}{\mathrm{SO}}
\newcommand{\tU}{\mathrm{U}}
\newcommand{\bea}{\begin{equation}}
\newcommand{\eea}{\end{equation}}
\newcommand{\bear}{\begin{eqnarray}}
\newcommand{\eear}{\end{eqnarray}}
\newcommand{\bearr}{\begin{eqnarray*}}
\newcommand{\eearr}{\end{eqnarray*}}
\newtheorem{prop}{Proposition}
\newtheorem{lem}{Lemma}
\newtheorem{corol}{Corollary}
\newtheorem{examp}{Example}
\newdimen\mytextwidth
\newcommand\rem[2][cyan!40!green]{\noindent\nobreak\hfil\penalty1000\hfilneg
\mytextwidth=\linewidth\advance\mytextwidth by 2mm
\begin{tikzpicture}[baseline=-\the\dimexpr\fontdimen22\textfont2\relax]\node[outer sep=0pt,draw=black,fill=#1,fill opacity=1,text opacity=1,rectangle,rounded corners]{\begin{varwidth}{\mytextwidth}\textcolor{white}{#2}\end{varwidth}};
\end{tikzpicture}\allowbreak
}
\newcommand\whiterem[2][white!]{\noindent\nobreak\hfil\penalty1000\hfilneg
\mytextwidth=\linewidth\advance\mytextwidth by 2mm
\begin{tikzpicture}[baseline=-\the\dimexpr\fontdimen22\textfont2\relax]\node[outer sep=0pt,draw=black,fill=#1,fill opacity=1,text opacity=1,rectangle,rounded corners,line width=1.5pt]{\begin{varwidth}{\mytextwidth}\textcolor{black}{#2}\end{varwidth}};
\end{tikzpicture}\allowbreak
}
\newcommand{\SU}{\mathsf{SU}}
\newmdenv[
  topline=false,
  bottomline=false,
  rightline=false,
  linewidth=2pt,
  skipabove=\topsep,
  skipbelow=\topsep
]{siderules}
\newmdenv[
  topline=false,
  bottomline=false,
  linewidth=2pt,
  skipabove=\topsep,
  skipbelow=\topsep
]{siderulesright}
\renewcommand{\@seccntformat}[1]{\csname the#1\endcsname.\quad}
\renewcommand{\@chap@pppage}{
  \clear@ppage
  \thispagestyle{plain}
  \if@twocolumn\onecolumn\@tempswatrue\else\@tempswafalse\fi
  \null\vfil
  \markboth{}{}
  {\centering
   \interlinepenalty \@M
   \normalfont
   \MakeUppercase \appendixpagename\par}
  \if@dotoc@pp
    \addappheadtotoc
  \fi
  \vfil\newpage
  \if@twoside
    \if@openright
      \null
      \thispagestyle{empty}
      \newpage
    \fi
  \fi
  \if@tempswa
    \twocolumn
  \fi
}
\definecolor{navycol}{RGB}{100,150,160}
   \definecolor{pinkcol}{RGB}{242,55,55}
   \definecolor{greencol}{RGB}{50,205,50}
   \definecolor{bluecol}{RGB}{30,144,255}
\titleformat*{\section}{\large\bfseries}
\titleformat*{\subsection}{\normalsize\bfseries}
\titleformat*{\subsubsection}{\normalsize\bfseries}
\titleformat*{\paragraph}{\large\bfseries}
\titleformat*{\subparagraph}{\large\bfseries}
\titlespacing{\author}{-5pt}{-5pt}{-5pt}[-5pt]
\renewcommand\subsubsection{\@startsection{subsubsection}{3}{\z@}
                                     {-3.25ex\@plus -1ex \@minus -.2ex}
                                     {-1.5ex \@plus -.2ex}
                                     {\normalfont\normalsize\bfseries}}
\renewcommand\subsection{\@startsection{subsection}{3}{\z@}
                                     {-3.25ex\@plus -1ex \@minus -.2ex}
                                     {-1.5ex \@plus -.2ex}
                                     {\normalfont\normalsize\bfseries}}                                     
\DeclareFontFamily{U}{solomos}{}
\DeclareFontShape{U}{solomos}{m}{n}{
  <-> s*[1.1]  gsolomos8r
}{}
\let \savenumberline \numberline
\def \numberline#1{\savenumberline{#1.}}
\patchcmd{\tableofcontents}{\@starttoc}{\vspace{-0.3cm}\@starttoc}{}{}
\newcounter{Chapcounter}
\newcommand{\chapter}[1] 
{ {\centering          
  \addtocounter{Chapcounter}{1} \Large \underline{\sffamily \texorpdfstring{\textbf{  Chapter \theChapcounter: ~#1}}{Lg}} }   
  \addcontentsline{toc}{section}{ \color{blue} \texorpdfstring{Chapter ~}{Lg}\theChapcounter.\texorpdfstring{~~}{Lg} #1 }    
}
\newcommand{\appendixbig}[1] 
{ {\centering          
   \Large \underline{\sffamily \textbf{  Appendices}} }    
}
\title{Isotropic embeddings of coadjoint orbits \\and magnetic geodesic flows}
\author{Dmitri Bykov$^{\,a,\,b,\,c,\,d}$\footnote{Emails:
 bykov@mi-ras.ru, dmitri.v.bykov@gmail.com} \,,  Andrew Kuzovchikov$^{\,a,\,b}$\footnote{Emails: kuzovchikov@mi-ras.ru, 
 andrkuzovchikov@mail.ru}
\\  \vspace{0cm}  \\
{\small $a)$ \emph{Steklov
Mathematical Institute of Russian Academy of Sciences,}} \\{\small \emph{Gubkina str. 8, 119991 Moscow, Russia} }\\
{\small $b)$ \emph{Institute for Theoretical and Mathematical Physics,}} \\{\small \emph{Lomonosov Moscow State University, 119991 Moscow, Russia}} \\
{\small $c)$ \emph{Moscow Institute of Physics and Technology,}} \\
{\small \emph{Institutskii per. 9, 141702  Dolgoprudny, Russia}}
\\
{\small $d)$ \emph{Beijing Institute of Mathematical Sciences and Applications (BIMSA),}} \\{\small \emph{Huairou District, Beijing
101408, China}} 
}
\date{}
\begin{document}

\maketitle

\ytableausetup{centertableaux}

\vspace{0cm}
\textbf{Abstract.} We consider isotropic and Lagrangian embeddings of coadjoint orbits of compact Lie groups into products of coadjoint orbits. After reviewing the known facts in the case of $\mathrm{SU}(n)$ we initiate a similar study for $\mathrm{SO}$ and $\mathrm{Sp}$ cases. In the second part we apply this to the study of dynamical systems with $\mathrm{SU}(n)$ symmetry, proving equivalence between systems of two types: those describing magnetic geodesic flow on flag manifolds and classical `spin chains' of a special type.

\newpage
\tableofcontents

\section{Introduction}

Coadjoint orbits of classical compact Lie groups -- $\mathrm{SU}(n)$, $\tSO(n)$ and $\mathrm{Sp}(n)$ -- are remarkable homogeneous spaces called generalized flag manifolds. These are complex, symplectic, K\"ahler manifolds, cf.~\cite{BordemannForger}. In the present paper we study isotropic (and sometimes Lagrangian) embeddings of such flag manifolds into products of other flag manifolds.

It is a beautiful geometric fact that such embeddings exist, which serves as a mathematical underpinning of the so-called Haldane mapping between spin chains and sigma models. This mapping was first established in the case of the $\CP^1$ model in~\cite{HaldaneNonlin} and was quoted as one of the official reasons for Haldane's Nobel Prize award~\cite{HaldaneNobel}. The relation of his discovery to symplectic geometry was in turn explained in~\cite{BykLagEmb, Bykov:2012am} (see also the review~\cite{AffleckReview}) and used to construct generalizations to the flag manifolds of $\mathrm{SU}(n)$.

In the present paper we review the subsequent developments~\cite{Bykov_2024}, emphasizing that at least in the $\mathrm{SU}(n)$ case the statements can be strengthened to describe not only the vicinity of the respective Lagrangian submanifold, but rather a whole dense set in the ambient space. This can be used to prove an equivalence between two seemingly unrelated dynamical systems: one describing interactions of a number of classical `spins', the other corresponding to geodesic motion on a flag manifold, possibly in the background of a monopole-like magnetic field.

Our second goal is to take the first steps towards an extension of the theory to the case of $\tSO$- and $\mathrm{Sp}$- orbits. This turns out to be not as straightforward as one might naively expect. In this case we are able to prove that there exists an \emph{isotropic} embedding of an arbitrary generalized  flag manifold into a product of Grassmannians, but, in general, this embedding is not Lagrangian. However, in the $\tSO(2n)$ case we do find a series of two-step flag manifolds that are Lagrangian in a product of Grassmannians. One should emphasize that our construction does not exclude the possibility that there exist other, so far unknown, Lagrangian embeddings.

\section{Generalized flag manifolds}
In this section, we recall some geometric properties of generalized flag manifolds. 
The generalized flag manifold $\mathcal{O}_{\Uplambda}^{\,\mathrm{G}}$  is a (co)adjoint orbit of the compact semisimple Lie group $\mathrm{G}$
\begin{align}
    \mathcal{O}_{\Uplambda}^{\,\mathrm{G}} := \{\;\mathrm{Ad}_{g}\Uplambda \,,\; g \in \mathrm{G}\;\}\,,
\end{align}
where $\Uplambda$ is an element of the Lie algebra $\mathfrak{g}$ of the group $\mathrm{G}$, $\mathrm{Ad}$ denotes adjoint action\footnote{For a matrix Lie group $\mathrm{Ad}_{g} x$ is simply $g x g^{-1}$} of $\mathrm{G}$. In general, every (co)adjoint orbit intersects\footnote{We refer to the book \cite{bump2004lie} for details on Lie groups.} with a maximal torus in $\mathrm{G}$. Thus, we will always assume that $\Uplambda$ is an element of the Cartan subalgebra  
 $\mathfrak{t}$ of $\mathfrak{g}$. 

The full classification of generalized flag manifolds has been done via painted Dynkin diagram technique and can be found, for example, in~\cite{BordemannForger, AlekseevskyFlags}. Any (co)adjoint orbit is a symplectic manifold \cite{KirillovMerits},  endowed with the Kirillov-Kostant-Souriau symplectic form 
\begin{align}
    \omega_x (\xi,\zeta) := \langle x, \left[\xi,\zeta\right]\rangle,
\end{align}
where $\langle \bullet, \bullet \rangle$ is a Killing form\footnote{For a matrix Lie group $\langle X,Y \rangle = \mathrm{Tr}\left(XY\right)$.} on $\mathfrak{g}$; $x \in \mathcal{O}_{\Uplambda}^{\,\mathrm{G}}$; $\xi,\zeta\, \in\, \mathrm{T}_{x}\mathcal{O}_{\Uplambda}^{\,\mathrm{G}}$. For $\alpha \in \mathbb{R} \backslash \{0\}$, $\Uplambda$ and $\alpha \cdot\Uplambda$ define  diffeomorphic generalized flag manifolds with a different normalization of the symplectic structure.  

Clearly, $\mathcal{O}_{\Uplambda}^{\,\mathrm{G}}$ is a homogeneous space of $\mathrm{G}$. Therefore, we can present the generalized flag manifold as a quotient space $\mathrm{G}/\mathrm{Stab}_{\Uplambda}$, where $\mathrm{Stab}_{\Uplambda} \subset \mathrm{G}$ is the  stabilizer of $\Uplambda \in \mathfrak{t}$. The  moment map $\mu:\,\mathcal{O}_{\Uplambda}^{\,\mathrm{G}} \mapsto \mathfrak{g}^\ast$ corresponding to the action of $\mathrm{G}$ is given by
\begin{align}
    \mu(x) = \langle x,\bullet \rangle\,.
\end{align}

The well-known examples of flag manifolds are  the $n$-dimensional projective space~$\mathbb{CP}^n$,  the Grassmannian $\mathrm{Gr}(k, n)$ of $k$-dimensional subspaces in $\mathbb{C}^n$, etc. 

\subsection{Isotropic and Lagrangian embeddings.} \label{Embeddings}
In this section we will describe  embeddings of generalized flag manifolds into products of other generalized flag manifolds. The embeddings will always be isotropic and sometimes Lagrangian (for other constructions of Lagrangian submanifolds in homogeneous spaces cf.~\cite{Mironov2004,Tyurin2021,NoharaUeda}).

Consider the orbit $\mathcal{O}_{\Uplambda}^{\,\mathrm{G}}$ of a group $\mathrm{G}$ and assume the following decomposition of~$\Uplambda \in \mathfrak{t}$ (for some positive integer $r$)
\begin{align}\label{Lambda1}
    \Uplambda = \sum_{i=0}^r \mathrm{C}_i \Uplambda_{i}\,,
\end{align}
where $\mathrm{C}_i \in \mathbb{R}$ and $\Uplambda_{i} \in \mathfrak{t}$ are such that 
\begin{align}\label{Lambdasum}
    \sum_{i=0}^r \Uplambda_{i} = 0\,.
\end{align}
Next, consider the map \begin{align}\label{orbitmap}
\mathcal{M}:\,\mathcal{O}_{\Uplambda}^{\,\mathrm{G}} \mapsto \prod\limits_{i=0}^r\mathcal{O}_{\Uplambda_i}^{\,\mathrm{G}}\,,\quad\quad \mathrm{Ad}_{g}\Uplambda \mapsto \{\mathrm{Ad}_{g}\Uplambda_i\}_{i=0}^r\,.
\end{align}
Since in the l.h.s. $g\in\mathrm{G} / \text{Stab}\, \Uplambda $ is defined up to right multiplication by an element of $\text{Stab}\, \Uplambda$, whereas in each term in the r.h.s. $g\in\mathrm{G} / \text{Stab}\, \Uplambda_i$, in order for~(\ref{orbitmap}) to be a well-defined map, one should require that 
$
    \mathrm{Stab}_\Uplambda \subset  \mathrm{H}_0:=\bigcap_{i=0}^r \mathrm{Stab}_{\Uplambda_i}\,.
$ 
In turn, if one requires that $\mathcal{M}$ be an embedding, one should impose  
\begin{align}\label{LambdaStab}
\mathrm{Stab}_\Uplambda = \bigcap_{i=0}^r \mathrm{Stab}_{\Uplambda_i}\,.
\end{align}
Indeed, in general  $\mathcal{M}$ can be written as a composition
\begin{align}
\mathcal{M}:\,\mathcal{O}_{\Uplambda}^{\,\mathrm{G}}\simeq \frac{\mathrm{G}}{\mathrm{Stab}_\Uplambda} \overset{\pi}{\mapsto} \frac{\mathrm{G}}{\mathrm{H}_0} \overset{i}{\hookrightarrow} \prod\limits_{i=0}^r\mathcal{O}_{\Uplambda_i}^{\,\mathrm{G}}\,,
\end{align}
with $\pi$ a forgetful map (with fiber $\mathrm{H}_0/ \mathrm{Stab}\,\Uplambda$) and $i$ an embedding. 

We choose the symplectic form $\Omega_{\mathrm{prod}}$ on $\prod\limits_{i=0}^r\mathcal{O}_{\Uplambda_i}^{\,\mathrm{G}}$ in a canonical way, which is to say, as a sum of the Kirillov-Kostant-Souriau forms on each $\mathcal{O}_{\Uplambda_i}^{\,\mathrm{G}}$. Consider the diagonal action of $\mathrm{G}$ on $\prod\limits_{i=0}^r\mathcal{O}_{\Uplambda_i}^{\,\mathrm{G}}$. The corresponding moment map is simply 
\begin{align}
    \Tilde{\mu}(x) = \sum_{i=0}^{r} \;\langle x_i,\bullet \rangle\,,\quad\quad\textrm{where}\quad\quad x_i \in \mathcal{O}_{\Uplambda_i}^{\,\mathrm{G}}\,.
\end{align}

The image of $\mathcal{O}_{\Uplambda}^{\,\mathrm{G}}$ under the map $\mathcal{M}$ lies in $\Tilde{\mu}^{-1}(0)$, which follows immediately from the linearity of the Killing form and~(\ref{Lambdasum}). In fact, the zero level of the moment map is $\Tilde{\mu}^{-1}(0) = \{\,\sum_{i=0}^{r}\,x_i = 0 \,\}$. Since~$\mathcal{O}_{\Uplambda}^{\,\mathrm{G}}$ is a $\mathrm{G}$-orbit, it is an isotropic submanifold of $\prod\limits_{i=0}^r\mathcal{O}_{\Uplambda_i}^{\,\mathrm{G}}$. Indeed, for any two vector fields $\xi, \zeta$ along the group orbit one has
\begin{align} \label{orbitisotropic}
\iota_{\xi}\,\iota_{\zeta}\,\Omega_{\mathrm{prod}} \big|_{\mathcal{O}_{\Uplambda}^{\,\mathrm{G}}} = \nabla_\xi\,\Tilde{\mu}[\zeta]\big|_{\mathcal{O}_{\Uplambda}^{\,\mathrm{G}}} = 0 \,,   
\end{align} 
where $\Tilde{\mu}[\zeta]$ stands for the natural pairing with a Lie algebra element $\zeta$ (identified with the respective vector field). 
Since  $\mathcal{O}_{\Uplambda}^{\,\mathrm{G}}$ is a $\mathrm{G}$-orbit, its tangent space at a point is isomorphic to a subspace of $\mathfrak{g}$, hence  $\Omega_{\mathrm{prod}} \big|_{\mathcal{O}_{\Uplambda}^{\,\mathrm{G}}} = 0$.

We have thus proven the following proposition:
\begin{prop} \label{embeddingProp}
    Under the conditions~(\ref{Lambda1}), (\ref{Lambdasum}), (\ref{LambdaStab}), $\mathcal{O}_{\Uplambda}^{\,\mathrm{G}}$ is an isotropic submanifold of $\prod\limits_{i=0}^r\mathcal{O}_{\Uplambda_i}^{\,\mathrm{G}}$, embedded via the map~(\ref{orbitmap}).
\end{prop}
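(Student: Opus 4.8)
The plan is to check three things in turn: that the map $\mathcal{M}$ of~(\ref{orbitmap}) is well-defined, that it is an embedding, and that its image is isotropic for $\Omega_{\mathrm{prod}}$. The first two are bookkeeping with stabilizers, while the isotropy reduces to a one-line computation once the tangent space of the image is identified; I expect no genuine obstacle, the only delicate point being that identification.

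For well-definedness: a point of $\mathcal{O}_{\Uplambda}^{\,\mathrm{G}}$ is $\mathrm{Ad}_g\Uplambda$ with $g$ defined only up to right multiplication by $\mathrm{Stab}_\Uplambda$, so the assignment $g\mapsto\{\mathrm{Ad}_g\Uplambda_i\}_i$ descends to the orbit exactly when $\mathrm{Stab}_\Uplambda\subseteq\mathrm{Stab}_{\Uplambda_i}$ for every $i$, i.e. $\mathrm{Stab}_\Uplambda\subseteq\mathrm{H}_0$, which is implied by~(\ref{LambdaStab}). For the embedding property, I would factor $\mathcal{M}$ as $\mathcal{O}_{\Uplambda}^{\,\mathrm{G}}\simeq\mathrm{G}/\mathrm{Stab}_\Uplambda\xrightarrow{\pi}\mathrm{G}/\mathrm{H}_0\xrightarrow{i}\prod_i\mathcal{O}_{\Uplambda_i}^{\,\mathrm{G}}$, where $i$ identifies $\mathrm{G}/\mathrm{H}_0$ with the $\mathrm{G}$-orbit of the point $(\Uplambda_0,\dots,\Uplambda_r)$ under the diagonal action --- an embedded submanifold, since orbits of smooth actions of compact groups are embedded. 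Under~(\ref{LambdaStab}) one has $\mathrm{Stab}_\Uplambda=\mathrm{H}_0$, so $\pi$ is a diffeomorphism and $\mathcal{M}=i\circ\pi$ is an embedding. (With only the inclusion $\mathrm{Stab}_\Uplambda\subseteq\mathrm{H}_0$, the map $\pi$ is a fibration with fibre $\mathrm{H}_0/\mathrm{Stab}_\Uplambda$ and $\mathcal{M}$ need not be injective.)

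It remains to prove isotropy. Since $\mathcal{M}$ is $\mathrm{G}$-equivariant and $\mathcal{O}_{\Uplambda}^{\,\mathrm{G}}$ is a single $\mathrm{G}$-orbit, at a point $x=\mathrm{Ad}_g\Uplambda$ of the image the differential $\mathrm{d}\mathcal{M}$ hits precisely the tuples $\xi\cdot x:=\{[\xi,\mathrm{Ad}_g\Uplambda_i]\}_{i=0}^r$ generated by the fundamental vector fields, $\xi\in\mathfrak{g}$. Using that $\Omega_{\mathrm{prod}}$ is the sum of the Kirillov--Kostant--Souriau forms on the factors together with $\mathrm{Ad}$-invariance of the Killing form,
\begin{align}
\Omega_{\mathrm{prod}}\big(\xi\cdot x,\,\eta\cdot x\big)
=\sum_{i=0}^r\big\langle\mathrm{Ad}_g\Uplambda_i,\,[\xi,\eta]\big\rangle
=\Big\langle\mathrm{Ad}_g\!\Big(\sum_{i=0}^r\Uplambda_i\Big),\,[\xi,\eta]\Big\rangle=0
\end{align}
by~(\ref{Lambdasum}), so $\Omega_{\mathrm{prod}}$ vanishes on the image and $\mathcal{M}$ is isotropic. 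Equivalently, one can argue through the moment map: linearity of the Killing form and~(\ref{Lambdasum}) put the image inside $\Tilde{\mu}^{-1}(0)$, and along the orbit $\iota_\xi\iota_\eta\Omega_{\mathrm{prod}}=\nabla_\xi\big(\Tilde{\mu}[\eta]\big)$, which vanishes because $\Tilde{\mu}[\eta]\equiv 0$ on the image. Either way, the vanishing of $\sum_i\Uplambda_i$ is what does all the work.
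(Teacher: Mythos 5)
Your proposal is correct and follows essentially the same route as the paper: the well-definedness and embedding discussion via the factorization through $\mathrm{G}/\mathrm{H}_0$ is the paper's, and your second isotropy argument (image inside $\Tilde{\mu}^{-1}(0)$ for the diagonal moment map, then $\iota_\xi\iota_\zeta\Omega_{\mathrm{prod}}=\nabla_\xi\Tilde{\mu}[\zeta]=0$ along the orbit) is exactly the proof given there. Your first isotropy computation, pulling back the sum of Kirillov--Kostant--Souriau forms on fundamental vector fields and using $\sum_i\Uplambda_i=0$ directly, is just a transparent unwinding of the same fact, so it does not constitute a different method, only a slightly more explicit presentation.
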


To see whether this embedding is Lagrangian it suffices to calculate the dimensions:  $\mathcal{O}_{\Uplambda}^{\,\mathrm{G}}$ is a Lagrangian submanifold of  $\prod\limits_{i=0}^r\mathcal{O}_{\Uplambda_i}^{\,\mathrm{G}}$ if and only if
\begin{align}
    2\,\mathrm{dim}\left(\mathcal{O}_{\Uplambda}^{\,\mathrm{G}}\right) = \sum\limits_{i=0}^r\,\mathrm{dim}\left(\mathcal{O}_{\Uplambda_i}^{\,\mathrm{G}}\right)\,.
\end{align}
We instantly deduce the following elementary consequence as a special case: 
\begin{corol} \label{simpleLagrExample}
    The embedding $\mathcal{O}_{\Uplambda}^{\,\mathrm{G}} \hookrightarrow \mathcal{O}_{\Uplambda}^{\,\mathrm{G}} \times \mathcal{O}_{-\Uplambda}^{\,\mathrm{G}}$ is Lagrangian.
\end{corol}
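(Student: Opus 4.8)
The plan is to read this off from Proposition~\ref{embeddingProp} and the dimension criterion stated immediately above it, by taking the shortest possible decomposition. Concretely, I would set $r=1$, $\Uplambda_0=\Uplambda$ and $\Uplambda_1=-\Uplambda$. Then~(\ref{Lambdasum}) holds trivially since $\Uplambda_0+\Uplambda_1=0$, and~(\ref{Lambda1}) is satisfied, e.g.\ with $\mathrm{C}_0=1$, $\mathrm{C}_1=0$ (in fact any $\mathrm{C}_0,\mathrm{C}_1$ with $\mathrm{C}_0-\mathrm{C}_1=1$ will do).

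The one point to verify is the stabilizer condition~(\ref{LambdaStab}). The key observation is that, since $\mathrm{Ad}_g$ acts linearly on $\mathfrak{g}$, one has $\mathrm{Ad}_g\Uplambda=\Uplambda$ if and only if $\mathrm{Ad}_g(-\Uplambda)=-\Uplambda$; hence $\mathrm{Stab}_{\Uplambda}=\mathrm{Stab}_{-\Uplambda}$, and therefore $\mathrm{Stab}_{\Uplambda_0}\cap\mathrm{Stab}_{\Uplambda_1}=\mathrm{Stab}_{\Uplambda}\cap\mathrm{Stab}_{-\Uplambda}=\mathrm{Stab}_{\Uplambda}$, which is precisely~(\ref{LambdaStab}). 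Proposition~\ref{embeddingProp} then applies and shows that $\mathcal{M}$ realizes $\mathcal{O}_{\Uplambda}^{\,\mathrm{G}}$ as an isotropic submanifold of $\mathcal{O}_{\Uplambda}^{\,\mathrm{G}}\times\mathcal{O}_{-\Uplambda}^{\,\mathrm{G}}$; note also that the same equality of stabilizers guarantees, via the factorization of $\mathcal{M}$ through $\mathrm{G}/\mathrm{H}_0$ noted in the text, that $\mathcal{M}$ is genuinely an embedding and not merely an immersion.

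Finally I would match dimensions. Because $\mathrm{Stab}_{-\Uplambda}=\mathrm{Stab}_{\Uplambda}$, the orbit $\mathcal{O}_{-\Uplambda}^{\,\mathrm{G}}\simeq\mathrm{G}/\mathrm{Stab}_{-\Uplambda}$ is diffeomorphic to $\mathcal{O}_{\Uplambda}^{\,\mathrm{G}}$, so $\dim\mathcal{O}_{-\Uplambda}^{\,\mathrm{G}}=\dim\mathcal{O}_{\Uplambda}^{\,\mathrm{G}}$ and hence $2\,\dim\mathcal{O}_{\Uplambda}^{\,\mathrm{G}}=\dim\mathcal{O}_{\Uplambda}^{\,\mathrm{G}}+\dim\mathcal{O}_{-\Uplambda}^{\,\mathrm{G}}=\dim\big(\mathcal{O}_{\Uplambda}^{\,\mathrm{G}}\times\mathcal{O}_{-\Uplambda}^{\,\mathrm{G}}\big)$. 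By the criterion recalled just before the corollary, an isotropic submanifold of half the ambient dimension is Lagrangian, which completes the argument. I do not expect any serious obstacle here: the whole content is the observation $\mathrm{Stab}_{\Uplambda}=\mathrm{Stab}_{-\Uplambda}$, everything else being a direct appeal to the already-proven proposition and to the dimension count.
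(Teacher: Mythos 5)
Your proposal is correct and follows essentially the same route as the paper: the paper's proof simply invokes Proposition~\ref{embeddingProp} with $\Uplambda_0=\Uplambda$, $\Uplambda_1=-\Uplambda$ (choosing $\mathrm{C}_0=1/2$, $\mathrm{C}_1=-1/2$, which like your choice satisfies $\mathrm{C}_0-\mathrm{C}_1=1$), with the stabilizer equality $\mathrm{Stab}_{\Uplambda}=\mathrm{Stab}_{-\Uplambda}$ and the half-dimension count left implicit. Your write-up just makes those two routine verifications explicit, which is fine.
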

\begin{proof}
    This follows from Proposition \ref{embeddingProp} by taking  $\Uplambda_0 = \Uplambda$, $\Uplambda_1 = -\Uplambda$ and $\mathrm{C}_0=1/2$, $\mathrm{C}_1=-1/2$. 
\end{proof}

\section{$\mathrm{SU}(n)$ case}
In this section, we will discuss  examples of generalized flag manifolds of  the $\mathrm{SU}(n)$ group. Here the Cartan subalgebra $\mathfrak{t}$ consists of the matrices 
\begin{align}
    \Uplambda = 
    \begin{pmatrix}
        \lambda_0 \cdot \mathds{1}_{n_0}  & & \\
        & \lambda_1 \cdot \mathds{1}_{n_1} & \\
        & & \ddots \\ 
        & & &  \lambda_r \cdot \mathds{1}_{n_r}
    \end{pmatrix}\,,
\end{align}
where $\mathds{1}_{n}$ denotes the $(n\times n)$ identity matrix, the eigenvalues $\lambda_i$ are distinct real numbers, $n_0+n_1+\dots+n_r = n$ and $\mathrm{Tr} \left(\Uplambda \right)=n_0\lambda_0+\dots + n_r \lambda_r = 0$. The orbit of $\Uplambda$ is  
\begin{align}
    \mathcal{F}_{n_0,n_1,\dots,n_r} := \mathcal{O}_{\Uplambda}^{\,\mathrm{SU}(n)} = \frac{\mathrm{SU}(n)}{\mathrm{S}\left(\mathrm{U}(n_0)\times\dots\times\mathrm{U}(n_r)\right)}\,. \label{defFlagManifold}
\end{align}
We will henceforth use the notation $\mathcal{F}_{n_0, n_1, \dots, n_r}$ to denote the general $\mathrm{SU}(n)$ flag manifold.

There is another, more geometric, description of the orbit~(\ref{defFlagManifold}) that explains its name, the flag manifold. Pick an ordering of the integers $n_0, \ldots, n_r$ featuring in the denominator. Here for simplicity we will assume that it is simply the lexicographic ordering fixed by the subscripts of $n_i$. Define the \emph{flag} of embedded linear spaces
\begin{align}\label{flagofspaces}
    0\subset L_1 \subset \cdots \subset L_{r+1}\simeq \CC^n\,,\quad\quad \textrm{where}\quad\quad \mathrm{dim}\,L_k=\sum_{j=1}^{k}\,n_{j-1}
\end{align}
Then $\mathcal{F}_{n_0, n_1, \dots, n_r}$ may be thought of as the manifold of such flags. Notice that the ordering of the $n_i$'s determines the dimensions of the nested subspaces. In fact, this is tantamount to the choice of an invariant complex structure on~$\mathcal{F}_{n_0, n_1, \dots, n_r}$ (cf.~\cite{BorelHirzebruch, AlekseevskyFlags} or the review~\cite{AffleckReview}).

Now we are ready to construct the embedding via the following set of $\{\Uplambda_i\}_{i=0}^r$:
\begin{align}\label{Lambda U(n) Grass}
    \Uplambda_i = 
    \begin{pmatrix}
    -n_i \cdot \mathds{1}_{d_{i-1}}  & & \\
     & (n-n_i) \cdot \mathds{1}_{n_i} & \\
     & & -n_i \cdot \mathds{1}_{n - d_{i}}
    \end{pmatrix} \,,
\end{align}
where $d_i = \sum_{k=0}^{i}\,n_k$ and $d_{-1}\equiv 0$. 

The orbit of each $\Uplambda_i$ is a Grassmann manifold 
\begin{align}
    \mathrm{Gr}(n_i, n) :=  \mathcal{O}_{\Uplambda_i}^{\,\mathrm{SU}(n)} = \frac{\mathrm{SU}(n)}{\mathrm{S}\left(\mathrm{U}(n_i)\times\mathrm{U}(n - n_i)\right)}
\end{align}
with the standard Fubini-Study symplectic form. When $n_i = 1$, we obtain the $(n-1)$-dimensional projective space $\mathbb{CP}^{n-1}$. 

Following Section \ref{Embeddings} and Proposition \ref{embeddingProp}, we get the isotropic embedding
\begin{align}\label{flagEmbedding}
    \mathcal{F}_{n_0,n_1,\dots,n_r} \hookrightarrow \mathrm{Gr}(n_0, n) \times \mathrm{Gr}(n_1, n) \times \dots \times \mathrm{Gr}(n_r, n)\,.
\end{align}
A simple calculation of the dimensions shows that the embedding is Lagrangian. Thus, we have proved the following proposition 
\begin{prop}[\cite{Bykov:2012am}]\label{lagrtheorem}
    $\mathcal{F}_{n_0,n_1,\dots,n_r}$ is a Lagrangian submanifold of $\left(\mathrm{Gr}(n_0, n) \times \dots \times \mathrm{Gr}(n_r, n), p\,\Omega\right)$, where the symplectic form $\Omega$ is a sum of Fubini-Study forms and $p\in \mathbb{Z}$ an overall factor. 
\end{prop}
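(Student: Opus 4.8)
The plan is to check that the particular matrices $\Uplambda_i$ of~(\ref{Lambda U(n) Grass}) satisfy the hypotheses of Proposition~\ref{embeddingProp}, and then to compare dimensions. First I would write each $\Uplambda_i$ in the block decomposition determined by the eigenspaces of $\Uplambda$: on the $j$-th block (of size $n_j$) it reads $(n\,\delta_{ij}-n_i)\,\mathds{1}_{n_j}$. Summing over $i$ and using $\sum_i n_i=n$ gives $\sum_{i=0}^r\Uplambda_i=0$, so~(\ref{Lambdasum}) holds. The same block computation shows that $\sum_i\mathrm{C}_i\Uplambda_i$ has $j$-th block $\big(n\,\mathrm{C}_j-\sum_i\mathrm{C}_i n_i\big)\mathds{1}_{n_j}$, hence the decomposition~(\ref{Lambda1}) is realized with $\mathrm{C}_j=\lambda_j/n$ (using $\sum_j n_j\lambda_j=0$); adding a common constant to all $\mathrm{C}_j$ yields the same $\Uplambda$, reflecting $\sum_i\Uplambda_i=0$.

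Next I would establish the stabilizer identity~(\ref{LambdaStab}). Since $\Uplambda_i=n\,P_i-n_i\,\mathds{1}_n$ with $P_i$ the orthogonal projector onto the $i$-th block, $\Uplambda_i$ has exactly two eigenvalues, so $\mathrm{Stab}_{\Uplambda_i}=\mathrm{S}(\mathrm{U}(n_i)\times\mathrm{U}(n-n_i))$ and $\mathcal{O}_{\Uplambda_i}^{\,\mathrm{SU}(n)}=\mathrm{Gr}(n_i,n)$; moreover, because the central summand $n_i\mathds{1}_n$ is annihilated by the Kirillov--Kostant--Souriau form, the latter equals $n$ times the standard Fubini--Study form on $\mathrm{Gr}(n_i,n)$. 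An element of $\mathrm{SU}(n)$ lying in $\bigcap_i\mathrm{Stab}_{\Uplambda_i}$ must preserve the $i$-th block for every $i$, hence is block-diagonal with respect to the full splitting $\mathbb{C}^n=\bigoplus_i\mathbb{C}^{n_i}$; this subgroup is precisely $\mathrm{Stab}_\Uplambda=\mathrm{S}(\mathrm{U}(n_0)\times\cdots\times\mathrm{U}(n_r))$. Therefore Proposition~\ref{embeddingProp} applies and yields the isotropic embedding~(\ref{flagEmbedding}), while the remark above identifies the product symplectic form $\Omega_{\mathrm{prod}}$ with $p\,\Omega$, where $\Omega=\sum_i\Omega^{(i)}_{\mathrm{FS}}$ and $p=n$.

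It then remains to upgrade ``isotropic'' to ``Lagrangian'' via the dimension criterion recalled above, i.e.\ to verify $2\dim\mathcal{F}_{n_0,\dots,n_r}=\sum_i\dim\mathrm{Gr}(n_i,n)$. From $\dim_{\mathbb R}\mathcal{F}_{n_0,\dots,n_r}=\dim\mathrm{SU}(n)-\dim\mathrm{S}(\mathrm{U}(n_0)\times\cdots\times\mathrm{U}(n_r))=n^2-\sum_i n_i^2$ and $\dim_{\mathbb R}\mathrm{Gr}(n_i,n)=2n_i(n-n_i)$, the right-hand side is $2\big(n\sum_i n_i-\sum_i n_i^2\big)=2\big(n^2-\sum_i n_i^2\big)$, so the identity holds and the embedding is Lagrangian. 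The only genuinely non-formal step is the stabilizer computation~(\ref{LambdaStab}) — showing that commuting with \emph{all} the $\Uplambda_i$ forces the full block-diagonal form — together with the normalization bookkeeping that pins down the integer $p$; everything else is an immediate consequence of Proposition~\ref{embeddingProp} and a count of dimensions.
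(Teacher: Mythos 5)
Your proposal is correct and follows essentially the same route as the paper: apply Proposition~\ref{embeddingProp} with the matrices~(\ref{Lambda U(n) Grass}) to get the isotropic embedding~(\ref{flagEmbedding}), then verify $2\big(n^2-\sum_i n_i^2\big)=\sum_i 2n_i(n-n_i)$ to conclude it is Lagrangian. You simply spell out the hypothesis checks (the decomposition, $\sum_i\Uplambda_i=0$, and the stabilizer identity) that the paper leaves implicit, which is a welcome but not different argument.
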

The proposition also means that at least a small neighborhood of $\mathcal{F}_{n_0,n_1,\dots,n_r}$ in the product of Grassmannians is symplectomorphic to $\mathrm{T}^\ast \mathcal{F}_{n_0,n_1,\dots,n_r}$ \cite{WEINSTEINLagr}. In fact, we will soon prove a refined version of this fact, but let us setup some notation first.

It follows from the definition (\ref{defFlagManifold}) that $\mathcal{F}_{n_0,n_1,\dots,n_r}$ may be parameterized by a set of mutually orthogonal planes, each plane of dimension $n_i$. Moreover, within each plane we may choose an orthonormal basis of vectors $\{\,z_a \in \mathbb{CP}^{n-1}\,\}_{a=d_{i-1}+1}^{d_i}$.   
Vectors within each such plane are defined up to an $\mathrm{U}(n_i)$ transformation. For example, in the case of $\mathrm{Gr}(k,n)$ one needs  a single set of vectors. Even more simply, $\mathbb{CP}^{n-1}$ can be defined by a single vector. We will always assume that all vectors $z_a$ are unit-normalized, i.e. $|z_a|^2 = 1$.

Consider the Grassmannian $\mathrm{Gr}(n_k,n)$: we group the corresponding vectors in a single matrix 
\begin{align}
    \mathsf{Z}_k = \begin{pmatrix}
    z_{d_{k-1}+1} & z_{d_{k-1}+2} & \dots & z_{d_k}
\end{pmatrix}\,.
\end{align} 
The standard Fubini-Study form on $\mathrm{Gr}(n_k,n)$ is then given by
\begin{align}
    \Omega_{\mathrm{Gr}(n_k,n)} = i\, \mathrm{Tr} \left(\mathrm{d} \mathsf{Z}_k^{\dagger} \wedge \mathrm{d} \mathsf{Z}_k\right). 
\end{align}
The Lagrangian embedding (\ref{flagEmbedding}) is defined by imposing the orthogonality condition on the vectors parametrizing different Grassmannians in the r.h.s. of (\ref{flagEmbedding}). 

Now we will take a closer look at $\mathrm{Gr}(n_0, n) \times \dots \times \mathrm{Gr}(n_r, n)$. To start with, we define a matrix 
\begin{align}
    \mathsf{Z} = 
    \begin{pmatrix}
        \mathsf{Z}_0 & \mathsf{Z}_1 & \dots & \mathsf{Z}_r
    \end{pmatrix}
\end{align}
parametrizing elements of the product of Grassmannians. We  define the determinantal variety  $\mathcal{D} := \{\,\mathrm{det}\left(\mathsf{Z}\right) = 0\,\}$ and the open set 
\begin{align}
\mathcal{X} = \prod_{i=0}^r \mathrm{Gr}(n_i, n)\,\Bigr\backslash\, \mathcal{D}\,,
\end{align}
i.e. the set of non-degenerate matrices, $\mathrm{det} \left( \mathsf{Z} \right) \neq 0$.  We prove the following:
\begin{prop}[\cite{Bykov_2024}]\label{propCotangent}
    $\mathcal{X}$ is symplectomorphic to an open subset of $\mathrm{T}^\ast \mathcal{F}_{n_0,n_1,\dots,n_r}$.
\end{prop}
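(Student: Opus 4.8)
The plan is to build an explicit symplectomorphism by polar‑decomposing the $n\times n$ matrix $\mathsf{Z}$: it splits canonically into a ``flag part'', the point of $\mathcal{F}_{n_0,n_1,\dots,n_r}$ (abbreviated $\mathcal{F}$) obtained by orthogonalising, and a positive Hermitian part, which records a cotangent covector. Concretely, for $[\mathsf{Z}]\in\mathcal{X}$ pick a representative with $\mathsf{Z}_i^{\dagger}\mathsf{Z}_i=\mathds{1}_{n_i}$ and $\det\mathsf{Z}\neq 0$, and write its polar decomposition $\mathsf{Z}=\mathsf{U}\mathsf{P}$, where $\mathsf{U}\in\mathrm{U}(n)$ and $\mathsf{P}=(\mathsf{Z}^{\dagger}\mathsf{Z})^{1/2}$ is positive Hermitian; this is uniquely defined and depends smoothly on $\mathsf{Z}$ since $\mathsf{Z}$ is invertible. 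As $\mathsf{U}$ is unitary, its blocks $\mathsf{U}_0,\dots,\mathsf{U}_r$ span mutually orthogonal $n_i$‑planes, so $[\mathsf{U}]$ is a well‑defined point of $\mathcal{F}$ (I work with $\mathrm{U}(n)$ rather than $\mathrm{SU}(n)$; the flag manifold is the same). As $\mathsf{Z}_i$ has orthonormal columns, $\mathsf{P}^{2}=\mathsf{Z}^{\dagger}\mathsf{Z}$ has identity diagonal blocks, so $p:=\mathsf{Z}^{\dagger}\mathsf{Z}-\mathds{1}$ lies in the space of Hermitian matrices with vanishing diagonal blocks, which the nondegenerate trace pairing identifies canonically with $\mathfrak{m}^{\ast}\cong\mathrm{T}^{\ast}_{[\mathsf{U}]}\mathcal{F}$, where $\mathfrak{u}(n)=\mathfrak{h}\oplus\mathfrak{m}$ is the reductive splitting attached to the isotropy group $\mathrm{H}=\mathrm{U}(n_0)\times\dots\times\mathrm{U}(n_r)$. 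Replacing $\mathsf{Z}$ by $\mathsf{Z}g$ with $g\in\mathrm{H}$ sends $(\mathsf{U},p)\mapsto(\mathsf{U}g,g^{-1}pg)$, so the assignment descends to a map $\Phi\colon\mathcal{X}\to\mathrm{T}^{\ast}\mathcal{F}\cong\mathrm{U}(n)\times_{\mathrm{H}}\mathfrak{m}^{\ast}$, $[\mathsf{Z}]\mapsto[(\mathsf{U},p)]$.

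Next I would check that $\Phi$ is a diffeomorphism onto the open set $\mathcal{V}\subset\mathrm{T}^{\ast}\mathcal{F}$ consisting of those $[(\mathsf{U},p)]$ for which $\mathds{1}+p$ is positive definite — an open ``tube'' around the zero section $p=0$, which is exactly the Lagrangian of Proposition~\ref{lagrtheorem}. The inverse sends $[(\mathsf{U},p)]$ to the class of $\mathsf{Z}=\mathsf{U}(\mathds{1}+p)^{1/2}$, and one verifies that $\mathsf{Z}^{\dagger}\mathsf{Z}=\mathds{1}+p$ has identity diagonal blocks (so $[\mathsf{Z}]$ is a genuine point of the product of Grassmannians), that $\mathsf{Z}$ is invertible (so $[\mathsf{Z}]\in\mathcal{X}$), and that this inverts $\Phi$; smoothness in both directions follows from smoothness of the matrix square root on positive matrices, and openness of $\mathcal{V}$ from the fact that positivity of $\mathds{1}+p$ is an open, $\mathrm{H}$‑invariant condition. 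In general $\mathcal{V}$ is a \emph{proper} open subset: already for $\mathbb{CP}^1\subset\mathbb{CP}^1\times\mathbb{CP}^1$ it is the open disc bundle, not all of $\mathrm{T}^{\ast}\mathbb{CP}^1$, so ``open subset'' is the best one can say, and this refines the qualitative Weinstein neighbourhood statement~\cite{WEINSTEINLagr}.

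The core of the argument is the identity $\Phi^{\ast}\omega_{\mathrm{can}}=\Omega_{\mathrm{prod}}$, which I would prove by comparing potentials. Passing to the product of Stiefel manifolds $\{(\mathsf{Z}_0,\dots,\mathsf{Z}_r):\mathsf{Z}_i^{\dagger}\mathsf{Z}_i=\mathds{1}_{n_i}\}$, which fibers over $\prod_i\mathrm{Gr}(n_i,n)$ with $\mathsf{Z}$ now an honest coordinate, one has $\Omega_{\mathrm{prod}}=i\,\mathrm{Tr}(\mathrm{d}\mathsf{Z}^{\dagger}\wedge\mathrm{d}\mathsf{Z})=\mathrm{d}\theta$ with $\theta=i\,\mathrm{Tr}(\mathsf{Z}^{\dagger}\mathrm{d}\mathsf{Z})$. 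Substituting $\mathsf{Z}=\mathsf{U}\mathsf{P}$ and using that $\mathrm{Tr}(\mathsf{Z}^{\dagger}\mathsf{Z})=\sum_i n_i=n$ is constant kills the radial piece $\mathrm{Tr}(\mathsf{P}\,\mathrm{d}\mathsf{P})$, leaving $\theta=i\,\mathrm{Tr}\big((\mathds{1}+p)\,\mathsf{U}^{\dagger}\mathrm{d}\mathsf{U}\big)$. Writing the Maurer--Cartan form as $\mathsf{U}^{\dagger}\mathrm{d}\mathsf{U}=\omega_{\mathfrak{h}}+\omega_{\mathfrak{m}}$ (block‑diagonal plus off‑block‑diagonal part) and using that $p$ has vanishing diagonal blocks, the unwanted traces $\mathrm{Tr}(\omega_{\mathfrak{m}})$ and $\mathrm{Tr}(p\,\omega_{\mathfrak{h}})$ vanish identically, so $\theta=i\,\mathrm{Tr}(p\,\omega_{\mathfrak{m}})+i\,\mathrm{d}\log\det\mathsf{U}$. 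The first term is exactly $\Phi^{\ast}$ of the tautological $1$‑form of $\mathrm{T}^{\ast}\mathcal{F}\cong\mathrm{U}(n)\times_{\mathrm{H}}\mathfrak{m}^{\ast}$, which in the standard description is $\langle p,\mathrm{pr}_{\mathfrak{m}}(\mathsf{U}^{\dagger}\mathrm{d}\mathsf{U})\rangle$ for the trace pairing, and, unlike $\theta$, it is already well defined on $\mathcal{X}$; the second term is closed. Taking $\mathrm{d}$ and descending to $\mathcal{X}$ gives $\Omega_{\mathrm{prod}}=\mathrm{d}\big(i\,\mathrm{Tr}(p\,\omega_{\mathfrak{m}})\big)=\Phi^{\ast}\omega_{\mathrm{can}}$, up to at most an overall positive constant, which is immaterial since it can be absorbed by rescaling the cotangent fibres.

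I expect the main obstacle to be this third step, and within it the bookkeeping of what is globally defined versus merely closed. Because $\mathrm{Tr}(\mathsf{Z}^{\dagger}\mathrm{d}\mathsf{Z})$ is not basic for the $\mathrm{H}$‑action — it shifts by $\mathrm{d}\log\det g$ — the potential $\theta$ lives only on the Stiefel cover, so the comparison must be carried out there and only descended at the level of $2$‑forms, while tracking the $\mathrm{U}(1)$‑ambiguity $\mathrm{d}\log\det\mathsf{U}$; and one must verify with care (it comes down to the block structure) that the unwanted traces really vanish and that the surviving term is the canonical $1$‑form with the correct normalisation. The remaining ingredients — smoothness of the polar decomposition on invertible matrices, well‑definedness of $\Phi$, openness of $\mathcal{V}$, and the standard formula for the Liouville form on $\mathrm{U}(n)\times_{\mathrm{H}}\mathfrak{m}^{\ast}$ — are routine.
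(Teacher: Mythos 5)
Your proposal follows essentially the same route as the paper's proof: polar decomposition $\mathsf{Z}=\mathsf{U}\mathsf{H}$, substitution into the potential of $i\,\mathrm{Tr}\left(\mathrm{d}\mathsf{Z}^{\dagger}\wedge\mathrm{d}\mathsf{Z}\right)$, identification of the off-diagonal part of $\mathsf{Z}^{\dagger}\mathsf{Z}$ with the cotangent fibre, and positive-definiteness of $\mathsf{Z}^{\dagger}\mathsf{Z}$ cutting out the open subset of $\mathrm{T}^\ast\mathcal{F}_{n_0,\dots,n_r}$. The only cosmetic difference is that you kill the flag-manifold contribution by a direct Maurer--Cartan/block computation (and spell out the $\mathrm{H}$-equivariance and the associated-bundle model of $\mathrm{T}^\ast\mathcal{F}$), whereas the paper invokes its Proposition \ref{lagrtheorem}; both are fine.
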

\begin{proof}
    The symplectic form on $\prod_{i=0}^r \mathrm{Gr}(n_i, n)$ can be written as follows:
    \begin{align}
        \Omega = p\,\sum_{i=0}^r \Omega_{\mathrm{Gr}(n_i,n)} = i\,p\, \, \mathrm{Tr} \left(\mathrm{d} \mathsf{Z}^{\dagger} \wedge \mathrm{d} \mathsf{Z}\right)\,.\label{symplecticStructureGrassmann}
    \end{align}
    Suppose $\mathsf{Z}$ is non-degenerate, i.e. $\mathsf{Z} \in \mathcal{X}$. According to the polar decomposition theorem, there is a unique factorization
    \begin{equation}\label{ZUH}
    \mathsf{Z} = \mathsf{U}\mathsf{H}\,,
    \end{equation}
    where $\mathsf{U}$ is a unitary and $\mathsf{H}$ a positive-definite Hermitian matrix. Clearly, $\mathsf{U} = \mathsf{Z} \left(\mathsf{Z}^{\dagger} \mathsf{Z}\right)^{-1/2}$ and $\mathsf{H} = \left(\mathsf{Z}^{\dagger} \mathsf{Z}\right)^{1/2}$. For later use we introduce the matrix $\mathsf{K} := \mathsf{H}^2$. Its diagonal elements are equal to $1$ due to the normalization conditions on the vectors $z_a$. Substituting the decomposition into (\ref{symplecticStructureGrassmann}), we arrive at 
    \begin{align} \label{symplecticFormCotangent}
        \Omega\big|_{\mathcal{X}} = i\,p\, \mathrm{d}\, \mathrm{Tr}\left(\mathsf{K} \mathsf{U}^{\dagger} \mathrm{d} \mathsf{U}\right) = i \,p\, \mathrm{Tr} \left(\mathrm{d} \mathsf{U}^{\dagger} \wedge \mathrm{d} \mathsf{U}\right) + i\,p\,\mathrm{d}\,\left(\sum\limits_{j\neq k} \mathsf{K}_{jk}\, \Bar{u}_j \mathrm{d}u_k\right)\,, 
    \end{align}
    where $u_k$ denote the columns of $\mathsf{U}$. We note that one may think of $\mathsf{U}$ as parametrizing the flag manifold  $\mathcal{F}_{n_0,n_1,\dots,n_r}$ embedded into  $\prod_{i=0}^r \mathrm{Gr}(n_i, n)$. Thus, the first term in~(\ref{symplecticFormCotangent}) vanishes due to Proposition \ref{lagrtheorem}. The second term represents the standard symplectic form on $\mathrm{T}^\ast \mathcal{F}_{n_0,n_1,\dots,n_r}$. As a result,~(\ref{symplecticFormCotangent}) proves that $\mathcal{X}$ is \emph{symplectomorphic} to an open subset of $\mathrm{T}^\ast \mathcal{F}_{n_0,n_1,\dots,n_r}$ via the decomposition~(\ref{ZUH}). Recall, however, that $\mathsf{K}$ is a positive-definite matrix by construction. This condition distinguishes an open subset of the cotangent bundle.
\end{proof}

We can increase the size of the open subset in Proposition \ref{propCotangent} by changing the
value of $p \in \mathbb{Z}$ in the symplectic form~(\ref{symplecticStructureGrassmann}), which effectively controls the size of the subset.   Morally, this means that $\prod_{i=0}^r \mathrm{Gr}(n_i, n)$, with the symplectic form~(\ref{symplecticStructureGrassmann}), is a symplectic compactification of $\mathrm{T}^\ast \mathcal{F}_{n_0,n_1,\dots,n_r}$ constructed by adding the divisor $\mathcal{D}$ and taking the limit $p\mapsto\infty$.

We note that $|\mathrm{det}\left(\mathsf{Z}\right)|^2$ is a function with a (zero) minimum on $\mathcal{D}$ and a maximum on the Lagrangian submanifold $\mathcal{F}_{n_0,\dots,n_r}$ in $\prod_{i=0}^r \mathrm{Gr}(n_i, n)$ (see~\cite{Audin, Biran} for a discussion of this setup). As regards the second claim, first notice that $|\mathrm{det}\left(\mathsf{Z}\right)|^2=\mathrm{det}\left(\mathsf{Z}^\dagger\mathsf{Z}\right).$ Denote the eigenvalues of the Hermitian matrix  $\mathsf{Z}^\dagger\mathsf{Z}$ by $\sigma_1, \ldots, \sigma_n$. Due to the normalization condition $|z_a|^2=1$ on the columns of $\mathsf{Z}$ one has ${\mathrm{Tr}(\mathsf{Z}^\dagger\mathsf{Z})=\sum_{k=1}^n \sigma_k=n}$. By the inequality for the geometric mean, $\mathrm{det}\left(\mathsf{Z}^\dagger\mathsf{Z}\right)=\prod_{k=1}^n \sigma_k\leq \left({1\over n}{\sum_{k=1}^n \sigma_k }\right)^n=1$, where the equality is attained only when all eigenvalues are equal, in which case  $\mathsf{Z}^\dagger\mathsf{Z}=\mathds{1}_n$. 

Another useful fact is that the divisor $p \mathcal{D}$  is Poincar\'e dual to $[\Omega]$, the cohomology class of the symplectic form~(\ref{symplecticStructureGrassmann}).  Indeed, $\left(\mathrm{det}\left(\mathsf{Z}\right)\right)^p$ is a section of the respective line bundle $\mathcal{O}(p\mathcal{D})$~\cite{huybrechts2005complex}. From the form of the polynomial $\left(\mathrm{det}\left(\mathsf{Z}\right)\right)^p$ it follows that it is at the same time a section of the line bundle\footnote{Here by $\mathcal{L}_k$ we denote the determinant of the (anti)tautological bundle over the $k^{\mathrm{th}}$ Grassmannian, so that $c_1(\mathcal{L}_k)=[\Omega_{\mathrm{Gr}(n_k,n)}]$.} ${(\mathcal{L}_0)^p\otimes\cdots \otimes (\mathcal{L}_r)^p}$, whose first Chern class is ${c_1=p\,\sum_{i=0}^r [\Omega_{\mathrm{Gr}(n_i,n)}]=[\Omega]}$.

Finally, let us discuss the physical implications of the mathematical construction just studied. Proposition \ref{propCotangent} means that, given a Hamiltonian $\mathrm{H}$ on $\prod_{i=0}^r \mathrm{Gr}(n_i, n)$, one obtains the induced  Hamiltonian $\mathrm{H}_{\mathrm{Ind}}$ on the open subset of $\mathrm{T}^\ast \mathcal{F}_{n_0,n_1,\dots,n_r}$. For a specific choice of $\mathrm{H}$ the induced Hamiltonian $\mathrm{H}_{\mathrm{Ind}}$ is the  Hamiltonian of the geodesic flow on $\mathcal{F}_{n_0,n_1,\dots,n_r}$. Before generalizing this observation to the case of the magnetic geodesic flow, we will illustrate Propositions \ref{lagrtheorem} and \ref{propCotangent} using the simple example of the sphere~$\mathcal{S}^2 = \CP^1$.

\subsection{Geodesic flow on $\CP^1$.}

First, let us construct, in very explicit terms, the map
\begin{align}\label{CP1map}
\mathcal{X}\simeq \CP^1 \times \CP^1 \,\Bigr\backslash\, \mathcal{D} \mapsto \mathrm{T}^\ast \CP^1 \,.    
\end{align}
The two spheres in the l.h.s. are parametrized by the unit vectors $\vec{n}_1, \vec{n}_2\in \mathbb{R}^3$. The divisor $\mathcal{D}$ is defined in this case by
$
    \mathcal{D}=\Bigl\{\vec{n}_2=\vec{n_1}\Bigr\}
$. 
We also recall the definition
\begin{align}
\mathrm{T}^\ast \CP^1=\Bigl\{\;\vec{n}^2=1, \quad \vec{\pi}\cdot \vec{n}=0\;\Bigr\}  \subset \mathbb{R}^6\,.
\end{align}
In these variables the map~(\ref{CP1map}) is defined as follows: 
\begin{align}\label{1}
\vec{n}=\frac{\vec{n}_1-\vec{n}_2}{\sqrt{2(1-\vec{n}_1\vec{n}_2)}}  \,,\quad\quad 
\vec{\pi}=p\,\frac{\vec{n}_1\times \vec{n}_2}{\sqrt{2(1-\vec{n}_1\vec{n}_2)}}\,.
\end{align}

One can explicitly check that this is a symplectomorphism, which coincides with the one of  Proposition~\ref{propCotangent} above. The Hamiltonian $\mathrm{H}=\vec{\pi}^{\,2}$ describes geodesic flow\footnote{This is most easily seen by writing the corresponding Lagrangian $
    L=\vec{\pi}\,\dot{\vec{n}}-\vec{\pi}^{\,2}+\uplambda\,\vec{\pi}\cdot \vec{n}\,,
$ where $\uplambda$ is a Lagrange multiplier and, additionally,  $\vec{n}^2=1$. Successively eliminating $\vec{\pi}$ and $\uplambda$, we arrive at the sigma model (i.e., geodesic flow) Lagrangian
$
    L={1\over 4} \,\dot{\vec{n}}^2
$. 
} on~$\CP^1$. Under the above map it is pulled back exactly to the  Hamiltonian
\begin{align}
\mathrm{H}=\vec{\pi}^{\,2}=p^2\,\frac{1+\vec{n}_1\vec{n}_2}{2}\,,
\end{align}
known in physics literature as the classical counterpart of the `spin chain' Hamiltonian~\cite{Bykov_2024} (i.e. the one describing the spin-spin interaction). 

Let us also see explicitly how the solutions of the two systems are mapped into each other. The e.o.m. of the `spin' model read
\begin{align}
    \dot{\vec{n}}_1=p\,\vec{n}_1\times \vec{n}_2\,,\quad\quad
    \dot{\vec{n}}_2=p\,\vec{n}_2\times \vec{n}_1\,.
\end{align}
It follows that
$
    \vec{n}_1+\vec{n}_2=\vec{a}
$ 
is an integral of motion. Notice that, as a result,
\begin{align}\label{arestr}
|\vec{a}|\leq 2 \,.   
\end{align}
If we define $\vec{n}_1-\vec{n}_2=\vec{m}$, the remaining equation may be cast in the form $
    \dot{\vec{m}}=p\,\vec{m}\times \vec{a}
$. 
If $\vec{e}_1, \vec{e}_2$ constitute an orthonormal basis orthogonal to $\vec{a}$, the solution is
\begin{align}
    \vec{m}=A \cos{(p|a|t)} \vec{e}_1+A \sin{(p|a|t)} \vec{e}_2\,.
\end{align}
Substituting in~(\ref{1}), we get
\begin{align}
    &\vec{n}=\cos{(p|a|t)} \vec{e}_1+\sin{(p|a|t)} \vec{e}_2\,,\\
    &\vec{\pi}={p|a|\over 2}\Bigr(-\sin{(p|a|t)} \vec{e}_1+ \cos{(p|a|t)} \vec{e}_2\Bigr)\,,
\end{align}
which are the geodesic equations on $\CP^1$. Note, however, that due to~(\ref{arestr})  we get only the geodesics with restricted momenta $|\vec{\pi}|\leq p$ (which defines the open subset in the cotangent bundle $\mathrm{T}^\ast \CP^1$). The restriction is lifted in the limit $p\to \infty$.

\section{$\mathrm{SO}, \mathrm{Sp}$ cases: examples}
 In this section, we will apply the above scheme to the cases when the group $\mathrm{G}$ is  $\mathrm{SO}(2n)$, $\mathrm{SO}(2n+1)$ or $\mathrm{Sp}(n)$.

We start with the case  $\mathrm{G}=\mathrm{SO}(2n)$ and fix the Cartan subalgebra~$\mathfrak{t}$. It consists of matrices of the form
\begin{align}\label{LambdaSO}
&\Uplambda = 
\begin{pmatrix}
    0 \cdot \mathds{1}_{n_0}  & & \\
    & \lambda_1 \cdot \mathds{1}_{n_1} & \\
    & & \ddots \\ 
    & & &  \lambda_r \cdot \mathds{1}_{n_r}
\end{pmatrix}
\otimes \mathcal{J}_2\,,\\
&\mathcal{J}_2 := 
\begin{pmatrix}
    0 & 1 \\
    -1 & 0
\end{pmatrix}\,,
\end{align}
where $r \geq 0 $, $\{\lambda_i\}_{i=1}^{r} \in \mathbb{R} \backslash \{0\}$ and $n_0 + n_1 + \dots +n_r = n$. Here we have assumed that $\Uplambda$ has zero eigenvalues: if not, one can simply set $n_0=0$. Besides, we assume that all $|\lambda_i|$'s are distinct although later we will also consider the cases where some of the  $|\lambda_i|$'s are equal (but $\lambda_i$'s distinct). 

The orbit of $\Uplambda$ in~(\ref{LambdaSO}) is the \emph{generalized} flag manifold\footnote{We formally set $\mathrm{SO}(0)$ to be the trivial group.}
\begin{align}
    \mathcal{O}_{\Uplambda}^{\,\mathrm{SO}(2n)} = \frac{\mathrm{SO}(2n)}{\mathrm{SO}(2n_0)\times \mathrm{U}(n_1) \times\dots\times \mathrm{U}(n_r)}\,.
\end{align}
This has a geometric interpretation very similar to the one of~(\ref{flagofspaces}). Again, one picks an ordering of $n_1, \ldots, n_r$ (say, the lexicographic one) and defines the flag
\begin{align}
    0\subset \tilde{L}_1 \subset \cdots \subset \tilde{L}_r\,,\quad\quad \textrm{where}\quad\quad \mathrm{dim}_{\mathbb{C}} \,\tilde{L}_k=\sum_{i=1}^k\,n_i\,,
\end{align}
but in this case $\tilde{L}_k$ are linear subspaces in $\CC^n$ isotropic w.r.t. a non-degenerate symmetric form (the one preserved by the action of $\tSO(2n)$).

The case $r=1$ corresponds to the Grassmannian of isotropic $n_1$-planes in $\CC^{2n}$. Its geometry was discussed in detail in~\cite{BK}. Henceforth we will use the following notation for such Grassmannians:
\begin{align}
    \mathsf{OGr}_{m}:=\frac{\tSO(2n)}{\tSO(2n-2m)\times\tU(m)}\,.
\end{align}

Our next goal is to construct isotropic embeddings of $\mathcal{O}_{\Uplambda}^{\,\mathrm{SO}(2n)}$ following the method of Section~\ref{Embeddings}. We choose the $\Uplambda_i$'s as follows:
\begin{align}
    &\Uplambda_0 = 
    \begin{pmatrix}
    0 \cdot \mathds{1}_{n_0}  &  \\
     & -1 \cdot \mathds{1}_{n - n_0} \\
    \end{pmatrix}
    \otimes 
    \mathcal{J}_2\,,\\
    &\Uplambda_i = 
    \begin{pmatrix}
    0 \cdot \mathds{1}_{n_0+\dots+n_{i-1}}  & & \\
     & 1 \cdot \mathds{1}_{n_i} & \\
     & & 0 \cdot \mathds{1}_{n_{i+1}+\dots+n_{r}}
    \end{pmatrix}
    \otimes 
    \mathcal{J}_2\,,\;\quad \text{where}\;i = 1,2,\dots,r\,. 
\end{align}
Clearly, $\sum_{i=0}^r \Uplambda_i = 0, \;\Uplambda = \Uplambda_0 + \sum_{i=1}^r (\lambda_i + 1) \Uplambda_i$ and $\mathrm{Stab}_{\Uplambda} = \bigcap_{i=0}^r \mathrm{Stab}_{\Uplambda_i}$. Thus, we can use Proposition \ref{embeddingProp} to construct the isotropic embedding 
\begin{align}\label{SO2nEmbed1}
\mathcal{O}_{\Uplambda}^{\,\mathrm{SO}(2n)} \hookrightarrow \mathsf{OGr}_{n-n_0}\times \prod_{i=1}^r\,\mathsf{OGr}_{n_i}\,.
\end{align}
Simple dimensional analysis shows that the only possible Lagrangian embeddings of this type belong to the case where $\mathcal{O}_{\Uplambda}^{\,\mathrm{SO}(2n)}$ is a Grassmannian, which is the case of Corollary \ref{simpleLagrExample}.

Consider the special case  $n_0 = 0$. The corresponding generalized flag manifold is
\begin{align}
    {\tilde{\mathcal{O}}}_{\Uplambda}^{\,\mathrm{SO}(2n)} = \frac{\mathrm{SO}(2n)}{\mathrm{U}(n_1)\times \mathrm{U}(n_2) \times\dots\times \mathrm{U}(n_r)}\,.
\end{align}
As above, we have the isotropic embedding 
\begin{align}\label{SO2nEmbed2}
{\tilde{\mathcal{O}}}_{\Uplambda}^{\,\mathrm{SO}(2n)}\hookrightarrow \mathsf{OGr}_n\times \prod_{i=1}^r\,\mathsf{OGr}_{n_i}\,.
\end{align}
There is at least one other embedding defined by
\begin{align}
    \Uplambda_i = 
    \begin{pmatrix}
    -\frac{1}{r-1} \cdot \mathds{1}_{n_1+\dots+n_{i-1}}  & & \\
     & 1 \cdot \mathds{1}_{n_i} & \\
     & & -\frac{1}{r-1} \cdot \mathds{1}_{n_{i+1}+\dots+n_{r}}
    \end{pmatrix}
    \otimes 
    \mathcal{J}_2\,,
\end{align}
where $i = 1,\dots,r$. At the same time, we should require $\sum_{i=1}^r \lambda_i = 0$. In this case one can express $\Uplambda = \sum_{i=1}^r \mathrm{C}_i \Uplambda_i$ for some coefficients $\mathrm{C}_i$. The resulting isotropic embedding is 
\begin{align}
{\tilde{\mathcal{O}}}_{\Uplambda}^{\,\mathrm{SO}(2n)} \hookrightarrow \prod_{i=1}^r\frac{\mathrm{SO}(2n)}{\mathrm{U}\left(n-n_i\right)\times \mathrm{U}(n_i)}\,. \label{SO2nEmbed3}
\end{align}
The embedding is Lagrangian only in the case $r=2$ that leads one  back to  Corollary~\ref{simpleLagrExample}. This example demonstrates the flexibility of our construction.

\subsection{$\mathrm{SO}(2n+1)$ and $\mathrm{Sp}(n)$.} 
The cases of $\mathrm{SO}(2n+1)$ and $\mathrm{Sp}(n)$\footnote{$\mathrm{Sp}(n)$ is a compact form of $\mathrm{Sp}(2n, \mathbb{C})$. For example, $\mathrm{Sp}(1) \simeq \mathrm{SU}(2)$.} can be treated fully analogously. We first fix the Cartan subalgebras $\mathfrak{t}$: 
\begin{align}
    &\Uplambda^{\mathrm{SO}(2n+1)} = 
    \begin{pmatrix}
        \Lambda
        \otimes 
        \mathcal{J}_2 & \\
        & 0
    \end{pmatrix}\,,\quad\quad 
\Uplambda^{\mathrm{Sp}(n)} = 
    \begin{pmatrix}
        i\Lambda & \\
         & -i\Lambda
    \end{pmatrix}\,,
    \end{align}
    \begin{align}
    &\text{where}\quad\Lambda = 
    \begin{pmatrix}
        0 \cdot \mathds{1}_{n_0}  & & \\
        & \lambda_1 \cdot \mathds{1}_{n_1} & \\
        & & \ddots \\ 
        & & &  \lambda_r \cdot \mathds{1}_{n_r}
    \end{pmatrix}.
\end{align}
We assume that $|\lambda_i|$'s are real distinct numbers and $n_0 + n_1 +\dots n_r = n$. We then obtain the following embeddings: 
\begin{align}\label{SO 2n+1 flag}
&\mathcal{O}_{\Uplambda}^{\,\mathrm{SO}(2n+1)}:=\frac{\mathrm{SO}(2n+1)}{\mathrm{SO}(2n_0+1)\times \mathrm{U}(n_1) \times\dots\times \mathrm{U}(n_r)} \hookrightarrow \mathsf{OGr}^{'}_{n-n_0} \times \prod_{i=1}^r \mathsf{OGr}^{'}_{n_i}\,,\\ \label{Sp 2n flag}
    &\mathcal{O}_{\Uplambda}^{\,\mathrm{Sp}(n)}:=\frac{\mathrm{Sp}(n)}{\mathrm{Sp}(n_0)\times \mathrm{U}(n_1) \times\dots\times \mathrm{U}(n_r)} \hookrightarrow \mathsf{SGr}_{n-n_0} \times \prod_{i=1}^r \mathsf{SGr}_{n_i}\,,\\
    &\text{where}\quad \mathsf{OGr}^{'}_{m}:=\frac{\mathrm{SO}(2n+1)}{\mathrm{SO}(2n-2m+1) \times \mathrm{U}(m)}\,,\quad \mathsf{SGr}_{m}:=\frac{\mathrm{Sp}(n)}{\mathrm{Sp}(n-m) \times \mathrm{U}(m)}\,.
\end{align}
It is easy to see that these embeddings have the same form as (\ref{SO2nEmbed1}) up to a suitable change from $\mathrm{SO}(2k)$ to $\mathrm{SO}(2k+1)$ or $\mathrm{Sp}(k)$. The same holds true when $n_0 = 0$. There are embeddings of the type~(\ref{SO2nEmbed2}) and~(\ref{SO2nEmbed3}), but there are no Lagrangian embeddings except for the case described in Corollary \ref{simpleLagrExample}.

Finally, we note that, just like in the case of the groups $\mathrm{SU}(n)$ and $\tSO(2n)$, the orbits in the l.h.s. of~(\ref{SO 2n+1 flag})-(\ref{Sp 2n flag}) are the manifolds of flags of linear subspaces isotropic w.r.t. a non-degenerate quadratic form: this form is symmetric in the case of $\tSO(2n+1)$ and skew-symmetric in the case of $\mathrm{Sp}(n)$.

\begin{examp}
A Lagrangian example.    
\end{examp}

The expectation is that, for any Grassmannian $X_\mathrm{Gr}$, the only nontrivial embedding is into its square, i.e. $X_\mathrm{Gr} \hookrightarrow X_\mathrm{Gr}\times X_\mathrm{Gr}$. Thus to arrive at some less trivial examples we should look beyond the realm of Grassmannians.

In the case of $\mathrm{SO}(4)$ the only two orbits are ${\mathrm{SO}(4) \over \mathrm{U}(1)^2}\simeq \CP^1\times \CP^1$ and ${\mathrm{SO}(4) \over \mathrm{U}(2)}\simeq \CP^1$, which are Grassmannians. In the case of $\mathrm{SO}(6)$ we have the following orbits:
\newcolumntype{C}{>{$}c<{$}}
\begin{center}
   \begin{tabular}{C|C|C|C|C}
        \text{Orbit} & {\mathrm{SO}(6) \over \mathrm{U}(1)^3} & {\mathrm{SO}(6) \over \mathrm{U}(1)\times \mathrm{U}(2)} & {\mathrm{SO}(6) \over \mathrm{U}(1)\times \mathrm{SO}(4)} &{\mathrm{SO}(6) \over \mathrm{U}(3)}  \\ \text{Dimension} & 
        12 & 10 & 8 & 6
    \end{tabular} 
\end{center}
Only the first one of these is non-Grassmannian, and in this case one can construct {the~embedding}
\begin{align}
    {\mathrm{SO}(6) \over \mathrm{U}(1)^3} \hookrightarrow \left({\mathrm{SO}(6) \over \mathrm{U}(3)}\right)^{\times 4}\,.
\end{align}
The corresponding orbits are defined by the matrices 
\begin{align}
    \Uplambda_1 =\begin{pmatrix}
        1 & 0 & 0\\
        0 & 1 & 0\\
        0 & 0 & 1
    \end{pmatrix}\otimes \mathcal{J}_2\,,\quad\quad &\Uplambda_2=
    \begin{pmatrix}
        1 & 0 & 0\\
        0 & -1 & 0\\
        0 & 0 & -1
    \end{pmatrix}\otimes \mathcal{J}_2\,,\quad\quad \Uplambda_3 =\begin{pmatrix}
        -1 & 0 & 0\\
        0 & 1 & 0\\
        0 & 0 & -1
\end{pmatrix}\otimes \mathcal{J}_2\,, \nonumber \\
     &\Uplambda_4 =\begin{pmatrix}
        -1 & 0 & 0\\
        0 & -1 & 0\\
        0 & 0 & 1
    \end{pmatrix}\otimes\mathcal{J}_2\,.
\end{align}
Notice that for any $k$ one has $\Uplambda_k^2=-\mathds{1}_6$, so that $\Uplambda_k$ defines a complex structure in~$\mathbb{R}^6$. As a result, $\mathrm{Stab}(\Uplambda_k)=\mathrm{U}(3)$ (and not $\mathrm{U}(1)\times \mathrm{U}(2)$  as one might naively think in the last three cases), so that indeed each orbit is $\mathrm{SO}(6)/\mathrm{U}(3)$. One can additionally check that $\bigcap_{i=1}^4 \mathrm{Stab}(\Uplambda_i)=\mathrm{U}(1)^3$. 
Clearly $\Uplambda_1+\Uplambda_2+\Uplambda_3+\Uplambda_4=0$, so that the embedding is Lagrangian.

\subsection{
An exceptional Lagrangian series.   }

We start with the manifold $\frac{\tSO(2n)}{\tU(1)\times \tU(n-1)}$ describing flags of the type $(x, y)$, where $x$ is an isotropic line, $y$ an isotropic $(n-1)$-plane and $x\oplus y$ an isotropic $n$-plane\footnote{By $\oplus$ we denote the plane spanned by the respective vectors.}. Besides, we assume that $y$ is orthogonal to~$x$ w.r.t. the standard sesquilinear scalar product in $\CC^{2n}$. We will denote by $y^\ast$ the isotropic $(n-1)$-plane whose basis is obtained by complex conjugation of the basis of~$y$ (hence $y^\ast$ is orthogonal to  $x\oplus y$).  
\begin{prop}\label{SOFlagLagrProp}
    Consider the  embedding 
    \begin{align}\label{SO2nembed}
        &\frac{\tSO(2n)}{\tU(1)\times \tU(n-1)}\hookrightarrow \frac{\tSO(2n)}{\tU(n)}\times \frac{\tSO(2n)}{\tU(n)}\times \frac{\tSO(2n)}{\tSO(2n-2)\times \tU(1)}\,\\
        &(x,y) \mapsto \Bigl(x \oplus y, x\oplus y^\ast, x\Bigr)
    \end{align} 
    It is Lagrangian w.r.t. a suitable choice of symplectic form in the r.h.s.
\end{prop}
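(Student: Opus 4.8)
The plan is to recognize the map in~(\ref{SO2nembed}) as an instance of the general construction of Proposition~\ref{embeddingProp} and then to promote ``isotropic'' to ``Lagrangian'' by a dimension count. I would realize the source orbit by the Cartan element $\Uplambda=\mathrm{diag}\bigl(\lambda_1,\,\lambda_2\,\mathds{1}_{n-1}\bigr)\otimes\mathcal{J}_2$ with $\lambda_1,\lambda_2\neq 0$ and $|\lambda_1|\neq|\lambda_2|$, so that $\mathrm{Stab}_\Uplambda=\tU(1)\times\tU(n-1)$ and the orbit is $\tSO(2n)/(\tU(1)\times\tU(n-1))$; viewing a point $\mathrm{Ad}_g\Uplambda$ of the orbit as an operator on $\CC^{2n}$, its $(+i\lambda_1)$-eigenspace is the isotropic line $x$ of the flag and its $(+i\lambda_2)$-eigenspace is the isotropic plane $y$, these being orthogonal for the sesquilinear product, with $x\oplus y$ isotropic for the complex-bilinear symmetric form precisely because $\lambda_1+\lambda_2\neq 0$. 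I would then set
\begin{align}
\Uplambda_0=\mathds{1}_n\otimes\mathcal{J}_2\,,\qquad
\Uplambda_1=\mathrm{diag}(1,-1,\dots,-1)\otimes\mathcal{J}_2\,,\qquad
\Uplambda_2=\mathrm{diag}(-2,0,\dots,0)\otimes\mathcal{J}_2\,.
\end{align}
Since $\Uplambda_0^2=\Uplambda_1^2=-\mathds{1}_{2n}$, each of $\Uplambda_0,\Uplambda_1$ defines an invariant complex structure on $\mathbb{R}^{2n}$, hence $\mathrm{Stab}_{\Uplambda_0}=\mathrm{Stab}_{\Uplambda_1}=\tU(n)$ (for the respective complex structures), while $\mathrm{Stab}_{\Uplambda_2}=\tSO(2n-2)\times\tU(1)$; thus the orbits $\mathcal{O}_{\Uplambda_0},\mathcal{O}_{\Uplambda_1},\mathcal{O}_{\Uplambda_2}$ are exactly the three factors on the right-hand side of~(\ref{SO2nembed}).

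Next I would check the three hypotheses of Proposition~\ref{embeddingProp}. Condition~(\ref{Lambdasum}) is immediate, since $\Uplambda_0+\Uplambda_1+\Uplambda_2=0$ by inspection. Condition~(\ref{Lambda1}) holds because $\Uplambda=\tfrac{\lambda_1+\lambda_2}{2}\,\Uplambda_0+\tfrac{\lambda_1-\lambda_2}{2}\,\Uplambda_1$. The substantive point is~(\ref{LambdaStab}): an element of $\mathrm{Stab}_{\Uplambda_0}\cap\mathrm{Stab}_{\Uplambda_1}$ must commute with $\Uplambda_0\Uplambda_1=\mathrm{diag}(-1,1,\dots,1)\otimes\mathds{1}_2$, which forces the orthogonal splitting $\mathbb{R}^{2n}=\mathbb{R}^{2}\oplus\mathbb{R}^{2(n-1)}$; on each summand it then has to commute with the (identical up to sign) complex structures induced by $\Uplambda_0$ and $\Uplambda_1$, so it lies in $\tU(1)\times\tU(n-1)$. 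As this subgroup is already contained in $\mathrm{Stab}_{\Uplambda_2}$, the full intersection equals $\mathrm{Stab}_\Uplambda$. Proposition~\ref{embeddingProp} then yields that the associated map~(\ref{orbitmap}) is an isotropic embedding, and unwinding the orbit parametrizations identifies it with $(x,y)\mapsto(x\oplus y,\,x\oplus y^\ast,\,x)$: the $(+i)$-eigenspace of $\mathrm{Ad}_g\Uplambda_0$ is $x\oplus y$, the sign flip built into $\Uplambda_1$ replaces $y$ by its complex conjugate $y^\ast$, and $\mathrm{Ad}_g\Uplambda_2$ retains the line $x$.

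Finally I would count dimensions: $\dim\tfrac{\tSO(2n)}{\tU(1)\times\tU(n-1)}=(n-1)(n+2)$, $\dim\tfrac{\tSO(2n)}{\tU(n)}=n(n-1)$ and $\dim\tfrac{\tSO(2n)}{\tSO(2n-2)\times\tU(1)}=4(n-1)$, so that $2(n-1)(n+2)=n(n-1)+n(n-1)+4(n-1)$ and the isotropic embedding is Lagrangian. The ``suitable choice of symplectic form'' is the one furnished by Proposition~\ref{embeddingProp}, i.e. the sum of Kirillov--Kostant--Souriau forms attached to the normalizations $\Uplambda_0,\Uplambda_1,\Uplambda_2$ above; equivalently, a linear combination of the invariant K\"ahler forms on the three factors with relative weights $1,1,2$, the last weight entering with the opposite sign, which is harmless since the Lagrangian property is insensitive to the overall sign of a symplectic form.

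The step I expect to be the main obstacle is condition~(\ref{LambdaStab}) and the eigenspace bookkeeping it requires: one must see why the two $\tSO(2n)/\tU(n)$ factors, which superficially look like they carry the same data, in fact intersect their stabilizers down to $\tU(1)\times\tU(n-1)$ (so that the map is genuinely an embedding, not merely an immersion), and why the middle factor records $y^\ast$ rather than $y$. The decomposition $\sum\Uplambda_i=0$, the expression $\Uplambda=\sum\mathrm{C}_i\Uplambda_i$, and the arithmetic of the dimension count are then routine.
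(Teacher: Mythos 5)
Your proof is correct, and it reaches the Lagrangian claim by a route that differs in an instructive way from the paper's. You verify the hypotheses of Proposition~\ref{embeddingProp} directly, by exhibiting Cartan representatives $\Uplambda_0=\mathds{1}_n\otimes\mathcal{J}_2$, $\Uplambda_1=\mathrm{diag}(1,-1,\dots,-1)\otimes\mathcal{J}_2$, $\Uplambda_2=\mathrm{diag}(-2,0,\dots,0)\otimes\mathcal{J}_2$ with $\sum_i\Uplambda_i=0$ and $\bigcap_i\mathrm{Stab}_{\Uplambda_i}=\tU(1)\times\tU(n-1)=\mathrm{Stab}_{\Uplambda}$ (your $\Uplambda_0\Uplambda_1$ argument is sound), and then conclude with the dimension count $2(n^2+n-2)=2n(n-1)+4(n-1)$. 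The paper uses the same dimension count, but establishes isotropy by working with explicit frames $u_i, v_i, w$ and the moment maps~(\ref{muU}), choosing the relative weights $(1,1,-2)$ --- the same as yours, since your $\Uplambda_2$ contributes $-2\mu_w$ in the notation of~(\ref{mueq}) --- and then proves, via an eigenvalue-bound argument, the stronger statement that $\mu^{-1}(0)$ coincides with the embedded flag manifold. Your argument is leaner for the bare Lagrangian statement; the paper's buys the additional fact $\mathcal{O}\simeq\mu^{-1}(0)$, which feeds its later theme of single-orbit zero level sets and the quantization remark about uniqueness of the singlet. Two small points to tighten: the identification of $\mathcal{O}_{\Uplambda_2}$ with isotropic lines should be fixed via the $(-2i)$-eigenspace of $\mathrm{Ad}_g\Uplambda_2$, since with the $(+2i)$-convention the third slot records $x^{\ast}$ rather than $x$; and the negative weight on the quadric factor is justified not by insensitivity to an \emph{overall} sign (it is a relative sign on one factor) but simply because the sum of the Kirillov--Kostant--Souriau forms attached to your $\Uplambda_0,\Uplambda_1,\Uplambda_2$ is itself a legitimate product symplectic form --- precisely the ``suitable choice'' in the statement and the one the paper adopts.
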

\begin{proof}
 The first and second factors in the r.h.s.~of (\ref{SO2nembed}) are the Grassmannians of isotropic $n$-planes in $\CC^{2n}$  whereas the third factor is the `quadric' , i.e. the variety ${w^tw=0\subset \CP^{2n-1}}$ (cf.~\cite{BK}). 
One easily calculates the dimension of l.h.s. to be $n^2+n-2$, which is half the dimension of the r.h.s. Let us prove that the l.h.s. can be described as $\mu^{-1}(0)$, where $\mu$ is the moment map for the diagonal action of $\tSO(2n)$. As a result, it is isotropic (see~(\ref{orbitisotropic})) and hence Lagrangian. 

Let us parametrize the  Grassmannian of isotropic $n$-planes using the vectors $u_1, \ldots, u_n$ satisfying $u_i^t u_j=0$ and $u_i^\dagger u_j=\delta_{ij}$ and the quadric using a single vector~$w$ ($w^tw=0, w^\dagger w=1$). The corresponding moment maps for the standard choice of symplectic forms are given by 
\begin{align}\label{muU}
&\mu_u=\sum_{i=1}^n\,\left(u_i\otimes u_i^\dagger-u_i^\ast \otimes u_i^t\right)\in \mathfrak{so}_{2n}\,,\\
&\mu_w=w\otimes w^\dagger-w^\ast \otimes w^t\,\in \mathfrak{so}_{2n}\,.
\end{align}
Assuming the first and second $n$-Grassmannians in~(\ref{SO2nembed}) are parametrized by vectors $u$ and $v$, we pick the symplectic form on the r.h.s. of~(\ref{SO2nembed}) in such a way that the full moment map equation takes the form (notice the factor of $2$ in the last term)
\begin{equation}\label{mueq}
    \mu=\mu_u+\mu_v-2 \mu_w=0\,.
\end{equation}
The key point is that each moment map, $\mu_u, \mu_v$ or $\mu_w$, is a difference of two projectors. As a result, it has eigenvalues $-1, 1$ and possibly $0$ implying that for any vector $x$, such that $x^\dagger x=1$, one has the inequality 
\begin{align}
    -1\leq x^\dagger \mu_{u}x\leq 1
\end{align}
(and similarly for all other moment maps). Taking the analogous matrix element of equation~(\ref{mueq}) and setting $x=w$, one finds $w^\dagger \mu_u w+w^\dagger \mu_v w-2=0$, which by the inequality above implies $w^\dagger \mu_u w=w^\dagger \mu_v w=1$. In turn, this means that  ${\mu_u w=\mu_v w=w}$. Looking back at the definition~(\ref{muU}) and taking scalar products with $u_i^t$ one finds that $u_i^t w=0$ and analogously $v_i^t w=0$, implying that
\begin{equation}
    w\in \mathrm{Span}\left(\Bigl\{ u_i \Bigr\}_{i=1}^n\right)\bigcap \mathrm{Span}\left(\Bigl\{ v_i \Bigr\}_{i=1}^n\right)\,.
\end{equation}
Without loss of generality, one may set $u_1=v_1=w$, in which case equation~(\ref{mueq}) turns into
\begin{align}
\mu_{u^\perp}+\mu_{v^\perp}=0\,,    
\end{align}
where the index $\perp$ refers to the projection to the subspace orthogonal to $w, w^\ast$, i.e. $\mu_u^\perp=\sum_{i=2}^n\,\left(u_i\otimes u_i^\dagger-u_i^\ast \otimes u_i^t\right)$. Clearly, the latter equation means that ${\mathrm{Span}\left(\Bigl\{ u_i \Bigr\}_{i=2}^n\right)=\mathrm{Span}\left(\Bigl\{ v_i^\ast \Bigr\}_{i=2}^n\right)}$.

It is also clear from above that the map $(w, u^\perp)\mapsto \Bigl(u, v, w\Bigr)$ is the one described in the proposition.
\end{proof}

\section{General embeddings $\mu=0$}

To start with, we prove the following lemma (we assume $\mu^{-1}(0)$ connected):

\begin{lem}
    An orbit $L\subset \mu^{-1}(0)$ is Lagrangian only if $L\simeq \mu^{-1}(0)$.
\end{lem}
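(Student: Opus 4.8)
The plan is to exploit the fact, established in~(\ref{orbitisotropic}), that any $\mathrm{G}$-orbit $L$ inside $\mu^{-1}(0)$ is automatically isotropic, together with a general principle about moment maps: the level set $\mu^{-1}(0)$, while possibly singular, is ``coisotropic'' at its smooth points in the sense that $T_x(\mu^{-1}(0))$ is the symplectic orthogonal of the tangent space to the $\mathrm{G}$-orbit through $x$. More precisely, for $x\in\mu^{-1}(0)$ one has $\ker(\mathrm{d}\mu_x)=(\mathfrak{g}\cdot x)^{\omega}$, the symplectic orthogonal complement of the tangent space $\mathfrak{g}\cdot x$ to the orbit. So first I would recall this identity, which is immediate from $\omega(\xi\cdot x,\,v)=\langle \mathrm{d}\mu_x(v),\xi\rangle$ for $\xi\in\mathfrak{g}$, $v\in T_x$.

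Next, suppose $L=\mathrm{G}\cdot x\subset\mu^{-1}(0)$ is Lagrangian. Then $T_xL=\mathfrak{g}\cdot x$ is a Lagrangian subspace, so $(\mathfrak{g}\cdot x)^{\omega}=\mathfrak{g}\cdot x=T_xL$. By the coisotropy identity this gives $\ker(\mathrm{d}\mu_x)=T_xL\subset T_x(\mu^{-1}(0))$, and combining with the obvious inclusion $T_x(\mu^{-1}(0))\subset\ker(\mathrm{d}\mu_x)$ we conclude $T_x(\mu^{-1}(0))=T_xL$ at every point $x\in L$ where $\mu^{-1}(0)$ is smooth (in particular everywhere on $L$, since $L$ is smooth and $L\subset\mu^{-1}(0)$ forces these dimensions to match). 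Hence $L$ is open in $\mu^{-1}(0)$. Since $L$ is also closed in $\mu^{-1}(0)$ (a $\mathrm{G}$-orbit of a compact group acting on a compact manifold is compact, hence closed), and $\mu^{-1}(0)$ is assumed connected, we get $L=\mu^{-1}(0)$.

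The one subtlety — and the place where I expect the main obstacle — is that $\mu^{-1}(0)$ need not be a smooth manifold a priori, so the statement ``$T_x(\mu^{-1}(0))=\ker(\mathrm{d}\mu_x)$'' needs care. The clean way around this is to argue purely dimension-wise: $L\subset\mu^{-1}(0)$ gives $\dim L\le\dim_x\mu^{-1}(0)$ for the local dimension at any $x\in L$, while the coisotropy identity gives $\dim\ker(\mathrm{d}\mu_x)=\dim L$ when $L$ is Lagrangian (using $\dim(\mathfrak{g}\cdot x)^\omega=\dim M-\dim(\mathfrak{g}\cdot x)$ and $\dim L=\tfrac12\dim M$); and since $\mu^{-1}(0)$ is cut out near $x$ by the components of $\mu$ whose differential has kernel exactly $\ker(\mathrm{d}\mu_x)$, the local dimension of $\mu^{-1}(0)$ at $x$ is at least $\dim\ker(\mathrm{d}\mu_x)=\dim L$. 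So $\dim_x\mu^{-1}(0)=\dim L$ for all $x\in L$, $L$ is open in $\mu^{-1}(0)$ near each of its points, and connectedness plus closedness finish the argument exactly as above. I would present the smooth-point version as the main line and add a remark handling the general case by this dimension count.
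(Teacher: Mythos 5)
Your main line is essentially the paper's own argument. The paper contracts $\mathrm{d}\mu[v]=\omega(\bullet,v)$ with a vector $w$ tangent to $\mu^{-1}(0)$ to get $\omega(w,v)=0$ for all orbit directions $v$ — which is exactly your identity $\mathrm{T}_x\mu^{-1}(0)\subset\ker(\mathrm{d}\mu_x)=(\mathfrak{g}\cdot x)^{\omega}$ — and then uses the fact that a Lagrangian subspace is maximal isotropic to force $\mathrm{T}_x\mu^{-1}(0)=\mathrm{T}_xL$ at every point of $L$, concluding $L\simeq\mu^{-1}(0)$ (connectedness of $\mu^{-1}(0)$ is assumed right before the lemma, just as you use it); your openness/closedness/connectedness finish merely makes the paper's last sentence explicit.

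One caveat about your remark on the non-smooth case: the inequality you invoke there points the wrong way. You claim the local dimension of $\mu^{-1}(0)$ at $x$ is \emph{at least} $\dim\ker(\mathrm{d}\mu_x)$. This is false in general — already for the $\mathrm{U}(1)$ moment map $\mu=\tfrac12(x^2+y^2)$ on $\mathbb{R}^2$ one has $\ker(\mathrm{d}\mu_0)=\mathbb{R}^2$ while $\mu^{-1}(0)=\{0\}$ — and, more to the point, a lower bound on $\dim_x\mu^{-1}(0)$ cannot be combined with the other lower bound $\dim L\le\dim_x\mu^{-1}(0)$ to yield the equality you assert; openness of $L$ in $\mu^{-1}(0)$ requires an \emph{upper} bound. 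The correct statement, which does hold here because $\mu$ is real-analytic, is $\dim_x\mu^{-1}(0)\le\dim\ker(\mathrm{d}\mu_x)$: the tangent cone (indeed the Zariski tangent space) of the level set at $x$ lies inside $\ker(\mathrm{d}\mu_x)$, which equals $\mathrm{T}_xL$ in the Lagrangian case. With that correction your dimension count genuinely covers the singular case, which the paper's proof (working formally with $\mathrm{T}_p\mu^{-1}(0)$) leaves implicit.
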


\begin{proof}
    Start with the definition of the moment map: $d\mu[v]=\omega(\bullet, v)$, where $v$ is a vector field generating the Lie group action. Take a point $p\in L$: contracting the above equality with an arbitrary vector field tangent to $\mu^{-1}(0)$ 
    we find that 
    \begin{align}\label{omegaWV}
        0=\omega(w, v)\,,\quad\quad \textrm{where}\quad v\in  \mathrm{T}_pL\,,\quad\quad w\in \mathrm{T}_p\mu^{-1}(0)
    \end{align}
    Now, suppose $v_1 \ldots v_\ell$ is a basis of vectors tangent to the orbit $L\subset \mu^{-1}(0)$ but not a basis in $\mathrm{T}_p\mu^{-1}(0)$. In this case there is a non-zero vector  $w\in \mathrm{T}_p\mu^{-1}(0)/\mathrm{T}_pL$. By~(\ref{omegaWV}) the restriction of $\omega$ to $\mathrm{Span}(\mathrm{T}_pL, w)$ is zero. As a result, $L$ is not Lagrangian unless $\mathrm{T}_p\mu^{-1}(0)\simeq \mathrm{T}_pL$ at every point implying that $\mu^{-1}(0)\simeq L$. 
\end{proof}

Thus, if one wishes to find Lagrangian submanifolds within $\mu^{-1}(0)$, one should search for cases where $\mu^{-1}(0)$ is a single orbit. Let us recall how this works in the case of $\mathrm{SU}(n)$ flag manifolds.

\subsection{$\mathrm{SU}(n)$ flag manifolds.}

We will now use the method employed in the proof of Proposition~\ref{SOFlagLagrProp}  to show that the flag manifold featuring in Proposition~\ref{lagrtheorem} is $\mu^{-1}(0)$. Indeed, for any matrix $\mu_i:=g_i\Uplambda_i g_i^\dagger$ in the orbit of $\Uplambda_i$ given by~(\ref{Lambda U(n) Grass}) one has the following bound:
\begin{align}\label{Un moment map bound}
    -n_i\leq x^\dagger  \mu_i x\leq n-n_i\,,
\end{align}
where $x$ is an arbitrary  vector such that $x^\dagger x=1$. The vanishing of the moment map corresponding to the diagonal action of $\mathrm{SU}(n)$ gives
\begin{align}
    \sum\limits_{i=1}^r\,\mu_i=0\,.
\end{align}
Take an eigenvector $x$ of the $\ell$-th moment map $\mu_\ell$ with eigenvalue $n-n_\ell$, i.e. ${\mu_\ell x=(n-n_\ell) x}$ (we additionally assume $x^{\dagger}x=1$),  then 
\begin{align}
    \sum\limits_{i\neq \ell}^r\,x^\dagger \mu_ix+(n-n_{\ell})=0\,.
\end{align}
Due to the bound~(\ref{Un moment map bound}) this is only satisfied whenever $\mu_ix=-n_i x$ for all $i\neq \ell$. This means that $x$ does not lie in any of the planes corresponding to the other Grassmannians. One can thus project to the subspace orthogonal to the corresponding  eigenspace of $\mu_\ell$ and proceed inductively. One will then find that the eigenspaces corresponding to positive eigenvalues of the $\mu_i$'s  are mutually orthogonal. This proves that $\mu^{-1}(0)$ is the flag manifold in the l.h.s. of~(\ref{flagEmbedding}).

\subsection{Embeddings of generalized flag manifolds.}

The requirement that $\mu^{-1}(0)$ be a single orbit is rather stringent: typically $\mu^{-1}(0)$ contains families of orbits. 
However, even if $\mu^{-1}(0)$ is a single orbit, in general it by no means has to be Lagrangian (for an illustration see~Appendix~\ref{regularappendix}). 
In this section  we will construct a family of \emph{isotropic} embeddings of generalized flag manifolds such that  $\mu^{-1}(0)$ is a single orbit.

Consider an arbitrary orbit
\begin{align}\label{Ogenembed}
\mathcal{O}^{\,\mathrm{SO}(2n)}_{n_1, \ldots, n_r}:=\frac{\tSO(2n)}{\tU(n_1)\times \cdots \times \tU(n_r)\times \tSO(2n-2m)}\,,\quad\quad m=n_1+\ldots + n_r \leq n    \,.
\end{align}
We will assume that the integers $n_1 \ldots n_r$ are ordered lexicographically; 
besides, we set $d_{\ell}=\sum_{s=1}^\ell n_s$.

\begin{prop}
    Consider the product manifold
    \begin{align}
\mathcal{N}^{\,\mathrm{SO}(2n)}=\mathsf{OGr}_{d_1}\times \mathsf{OGr}_{d_2}\times \cdots \times \mathsf{OGr}_{m}\times \mathsf{OGr}_{m}\,,
    \end{align}
    endowed with a product symplectic form. The symplectic form on each orbit is defined using the following set of $\Uplambda$'s ($\Uplambda_\ell^{\mathrm{SO}(2n)}:=\Upsilon_{\ell}\otimes \mathcal{J}_2$):
\begin{align}\nonumber
& \Upsilon_1=2^{r-1} \,\mathrm{Diag}\Bigl( \mathds{1}_{n_1}, \mathbf{0}_{n_2}, \ldots \Bigr)\,,\quad\quad \\ \nonumber
&\Upsilon_2=2^{r-2} \,\mathrm{Diag}\Bigl( -\mathds{1}_{n_1}, \mathds{1}_{n_2}, \mathbf{0}_{n_3}, \ldots \Bigr)\,,\\ \label{SO general Upsilon}
& \Upsilon_3=2^{r-3} \,\mathrm{Diag}\Bigl( -\mathds{1}_{n_1}, -\mathds{1}_{n_2}, \mathds{1}_{n_3}, \mathbf{0}_{n_4}, \ldots \Bigr)\,,\quad  \\ \nonumber
& \hspace{1cm}\vdots \\ \nonumber
&\Upsilon_{r}= \,\mathrm{Diag}\Bigl( -\mathds{1}_{n_1}, -\mathds{1}_{n_2}, \ldots,  -\mathds{1}_{n_{r-1}}, \mathds{1}_{n_r}, \mathbf{0}_{n-m}  \Bigr)\,,\quad\\
&\Upsilon_{r+1}= \,\mathrm{Diag}\Bigl( -\mathds{1}_{n_1}, -\mathds{1}_{n_2}, \ldots,  -\mathds{1}_{n_r} , \mathbf{0}_{n-m}  \Bigr)\,.\nonumber
\end{align} 
Then 
    there is an isotropic embedding  
\begin{align}\label{embedSOgen}
    \mathcal{O}^{\,\mathrm{SO}(2n)}_{n_1, \ldots, n_r} \hookrightarrow 
    \mathcal{N}^{\,\mathrm{SO}(2n)}
\end{align}
such that $\mathcal{O}^{\,\mathrm{SO}(2n)}_{n_1, \ldots, n_r}\simeq \mu^{-1}(0)$, where~$\mu$ is the  moment map corresponding to the diagonal action of $\mathrm{SO}(2n)$.  The map~(\ref{embedSOgen}) is given by
\begin{align}
    (x_1, x_2, \ldots, x_r) \mapsto \Bigl(x_1, x_1^\ast\oplus x_2, \ldots, x_1^\ast\oplus \cdots \oplus x_{r-1}^\ast\oplus x_{r}, x_1^\ast\oplus \cdots \oplus x_{r-1}^\ast\oplus x_r^\ast\Bigr)\,,
\end{align}
where  $(x_1, x_2, \ldots, x_r)$ is  the $r$-tuple of mutually orthogonal linear spaces laying inside the isotropic plane $x_1\oplus \cdots \oplus x_r$.
\end{prop}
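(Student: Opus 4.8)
The plan is to follow the two arguments already carried out in the text — the proof of Proposition~\ref{SOFlagLagrProp} and the treatment of the $\mathrm{SU}(n)$ flag manifolds — in two steps: (A) apply Proposition~\ref{embeddingProp} to produce an isotropic embedding whose image sits inside $\mu^{-1}(0)$; (B) prove the reverse inclusion $\mu^{-1}(0)\subseteq(\text{image})$ by a ``difference of projectors'' eigenvalue estimate together with an induction on $r$, which simultaneously shows that $\mu^{-1}(0)$ is exactly the displayed orbit.

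\emph{Step (A).} One checks the hypotheses of Proposition~\ref{embeddingProp}. The identity $\sum_{\ell=1}^{r+1}\Upsilon_\ell=0$ is immediate, since in the $n_k$-block the entries add to $2^{r-k}-(2^{r-k-1}+\cdots+2^0)-1=0$, so $\sum_\ell\Uplambda_\ell^{\mathrm{SO}(2n)}=0$. The linear system $\Lambda=\sum_{\ell=1}^{r+1}\mathrm{C}_\ell\Upsilon_\ell$ is lower-triangular (set $\mathrm{C}_{r+1}=0$ and solve in turn $\mathrm{C}_r=\lambda_r$, $\mathrm{C}_{r-1}=\tfrac12(\lambda_{r-1}+\lambda_r),\dots$), so $\Uplambda=\sum_\ell\mathrm{C}_\ell\Uplambda_\ell^{\mathrm{SO}(2n)}$. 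Because $\Upsilon_1,\dots,\Upsilon_r$ jointly separate all the blocks $n_1,\dots,n_r,n-m$ (use $\Upsilon_1$ to peel off $n_1$, then $\Upsilon_2$ to peel off $n_2$, and so on), any matrix commuting with every $\Uplambda_\ell^{\mathrm{SO}(2n)}$ preserves each block and lies there in $\tU(n_j)$ or $\tSO(2n-2m)$, whence $\bigcap_\ell\mathrm{Stab}_{\Uplambda_\ell}=\mathrm{Stab}_\Uplambda$. Finally $\Upsilon_\ell^2=2^{2(r-\ell)}\mathrm{Diag}(\mathds{1}_{d_\ell},\mathbf{0}_{n-d_\ell})$ and $\Upsilon_{r+1}^2=\mathrm{Diag}(\mathds{1}_m,\mathbf{0}_{n-m})$, so each $\Uplambda_\ell^{\mathrm{SO}(2n)}$ is a nonzero multiple of a partial complex structure of rank $2d_\ell$ (resp.\ $2m$); hence $\mathrm{Stab}_{\Uplambda_\ell}=\tU(d_\ell)\times\tSO(2n-2d_\ell)$ and the $\ell$-th orbit is $\mathsf{OGr}_{d_\ell}$ (resp.\ $\mathsf{OGr}_m$). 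Since $\mathrm{Ad}_g\Uplambda_\ell^{\mathrm{SO}(2n)}$ has purely imaginary nonzero eigenvalues, it is determined by the isotropic $d_\ell$-plane on which it acts with positive imaginary eigenvalue; tracking the $\mathrm{SO}(2n)$-action and noting that a $-\mathds{1}$ block of $\Upsilon_\ell$ contributes the complex conjugate of a sub-plane identifies the embedding with the map displayed in the statement. Proposition~\ref{embeddingProp} then gives an isotropic embedding with image inside $\mu^{-1}(0)$.

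\emph{Step (B).} Parametrize a point of $\mathcal{N}^{\,\mathrm{SO}(2n)}$ by isotropic planes $(P_1,\dots,P_r,P_{r+1})$ with $\dim P_\ell=d_\ell$ for $\ell\le r$ and $\dim P_{r+1}=m$, and set $M(P):=\sum_a\left(u_a\otimes u_a^\dagger-u_a^\ast\otimes u_a^t\right)$ for an orthonormal basis $\{u_a\}$ of an isotropic plane $P$. With the above normalization the moment-map equation for the diagonal $\mathrm{SO}(2n)$-action reads
\[
\sum_{\ell=1}^{r}2^{\,r-\ell}\,M(P_\ell)+M(P_{r+1})=0\,.
\]
The facts I will use are: (i) for a unit vector $x$, $x^\dagger M(P)x=\|\Pi_Px\|^2-\|\Pi_{P^\ast}x\|^2\in[-1,1]$ (using $P\perp P^\ast$, which holds by isotropy), with value $1$ iff $x\in P$ and $-1$ iff $x\in P^\ast$; (ii) $M(P^\ast)=-M(P)$; (iii) $M$ is additive over Hermitian-orthogonal direct sums of isotropic subspaces. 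Picking a unit $x\in P_1$, so $x^\dagger M(P_1)x=1$, and using $\sum_{\ell=2}^r 2^{r-\ell}+1=2^{r-1}$, the equation forces $x^\dagger M(P_\ell)x=-1$, hence $x\in P_\ell^\ast$, for every $\ell\ge 2$; therefore $P_1^\ast\subseteq P_\ell$. Isotropy of $P_\ell$ then gives $P_\ell\perp P_1$, so $P_\ell=P_1^\ast\oplus P_\ell'$ with $P_\ell':=P_\ell\cap W_1$, where $W_1:=(P_1\oplus P_1^\ast)^\perp\cong\CC^{2(n-n_1)}$ is a non-degenerate quadratic subspace. Substituting and cancelling $2^{r-1}\bigl(M(P_1)+M(P_1^\ast)\bigr)=0$ reduces the equation to $\sum_{\ell=2}^r 2^{r-\ell}M(P_\ell')+M(P_{r+1}')=0$ on $W_1$, which is \emph{verbatim} the moment-map equation of the same proposition with $r\mapsto r-1$, $n\mapsto n-n_1$ and $(n_1,\dots,n_r)\mapsto(n_2,\dots,n_r)$. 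An induction on $r$ (base case $r=1$: $M(P_1)=-M(P_2)=M(P_2^\ast)$ forces $P_1=P_2^\ast$) then yields mutually orthogonal isotropic $x_2,\dots,x_r\subset W_1$ with $x_2\oplus\cdots\oplus x_r$ isotropic and $P_\ell'=x_2^\ast\oplus\cdots\oplus x_{\ell-1}^\ast\oplus x_\ell$; setting $x_1:=P_1$ gives $P_\ell=x_1^\ast\oplus\cdots\oplus x_{\ell-1}^\ast\oplus x_\ell$, while $x_1\perp x_j$ (since $x_1^\ast\subset P_1^\ast\perp W_1$) makes $x_1\oplus\cdots\oplus x_r$ isotropic. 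Hence $\mu^{-1}(0)$ is exactly the image of the embedding.

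The part that I expect to need the most care is the bookkeeping in Step (B): confirming that, despite the $\pm\mathds{1}$ sign pattern inside each $\Upsilon_\ell$, the $\ell$-th moment map reduces to the single term $2^{r-\ell}M(P_\ell)$ once the conjugated sub-planes are absorbed into $P_\ell$ (this relies on $M(P^\ast)=-M(P)$ and on the identification of the scaled orbit with $\mathsf{OGr}_{d_\ell}$), and then verifying that the passage modulo $P_1\oplus P_1^\ast$ reproduces precisely the family (\ref{SO general Upsilon}) for $\mathrm{SO}(2(n-n_1))$, so that the induction genuinely closes. Step (A) is essentially a check of the hypotheses of Proposition~\ref{embeddingProp} plus the standard identification of the scaled orbits with isotropic Grassmannians.
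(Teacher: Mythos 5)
Your proposal is correct and follows essentially the same route as the paper: the inclusion of the orbit in $\mu^{-1}(0)$ via Proposition~\ref{embeddingProp} (using $\sum_\ell\Upsilon_\ell=0$), and then the reverse inclusion by the extremal-eigenvalue bound $|x^\dagger\mu_\ell x|\leq 2^{r-\ell}$ combined with the power-of-two identity to force $P_1^\ast\subset P_\ell$, followed by projecting to the orthogonal complement of $P_1\oplus P_1^\ast$ and inducting on $r$. Your Step (A) merely makes explicit the hypothesis checks (the coefficients $\mathrm{C}_\ell$, the stabilizer intersection, the identification of the orbits with $\mathsf{OGr}_{d_\ell}$) that the paper leaves implicit, and your projector bookkeeping with $M(P)$ is the paper's truncation of the $\Upsilon_\ell$'s in different notation.
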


\begin{proof}
The relevant moment map is 
\begin{align}\label{O(2n) full moment map}
\mu=i\,\sum\limits_{\ell=1}^{r+1}\,g_\ell \Uplambda_\ell^{\mathrm{SO}(2n)} g_\ell^t\,,
\end{align}
where $g_\ell \in \tSO(2n)$ for every $\ell$. Besides, one easily checks  that 
\begin{align}\label{power2sum}
\left(\sum_{\ell=1}^{r+1}\,\Upsilon_{\ell}\right)_{ii}=2^{r-i}-\sum_{j=i+1}^{r} 2^{r-j}-1=0
\end{align}
so that $\sum_{\ell=1}^{r+1}\,\Uplambda_{\ell}^{\mathrm{SO}(2n)}=0$. Thus, by Proposition~\ref{embeddingProp}, the orbit $\mathcal{O}^{\,\mathrm{SO}(2n)}_{n_1, \ldots, n_r}\subset \mu^{-1}(0)$  is  isotropic. 

Next we wish to show that, in fact, $\mathcal{O}^{\,\mathrm{SO}(2n)}_{n_1, \ldots, n_r}\simeq \mu^{-1}(0)$. To this end, notice that the matrices  $\mu_{\ell}:=i\,g_{\ell} \Uplambda_\ell^{\mathrm{SO}(2n)} g_{\ell}^t$ with $\ell=1, \ldots, r$ have eigenvalues $0, \pm 2^{r-\ell}$; the matrix $\mu_{r+1}$ has eigenvalues~$0, \pm 1$, so that for any $x$ with $x^\dagger x=1$ one has the bounds
\begin{align}\label{mubound1}
    &-2^{r-\ell}\leq x^\dagger \mu_{\ell} x \leq 2^{r-\ell}\,,\quad\quad \ell=1, \ldots, r\\ 
    \label{mubound2}&-1\leq x^\dagger \mu_{r+1}x \leq 1
\end{align}
Next, take for $x$ any eigenvector of $\mu_1$  with eigenvalue $2^{r-1}$:
\begin{align}
    \mu_1 x=2^{r-1} x
\end{align}
Then from $\mu=0$, using the explicit expression~(\ref{O(2n) full moment map}), we get $2^{r-1}+\sum\limits_{\ell=2}^{r+1}\,x^\dagger \mu_{\ell} x=0$. Due to the bounds~(\ref{mubound1})-(\ref{mubound2}) and the property~(\ref{power2sum}) this is only possible if ${x^\dagger \mu_{\ell} x=-2^{r-\ell}}$ for $\ell=2, \ldots, r$ and $x^\dagger \mu_{r+1} x=-1$. It then follows that $x$ is an eigenvector of all $\mu_\ell$'s:
\begin{align}
    &\mu_\ell x=-2^{r-\ell} x\,,\quad \ell=2, \ldots, r\\
    &\mu_{r+1} x=-x
\end{align}
This means that if $(y_1, y_2, \ldots, y_r, y_{r+1})$ is a point in the product of Grassmannians in the r.h.s. of~(\ref{embedSOgen}) such that $\mu=0$ at this point, then $y_1^\ast\subset y_2, y_3$, etc. We may thus express the remaining $y_i$'s as follows: $y_\ell=y_1^\ast \oplus y_{\ell}^\perp$ for $\ell=2, \ldots r+1$. Here $y_{\ell}^\perp$ comprises those vectors from $y_{\ell}$ that are orthogonal to $y_1, y_1^\ast$.

It is also easy to write out the truncated system of equations on $y_\ell^\perp$, which follow from~$\mu=0$. These equations are the projection of $\mu=0$ to the subspace orthogonal to $y_1, y_1^\ast$ of dimension $2n-2n_1$. It has the form
\begin{align}
\mu_1^\perp=i\,\sum\limits_{\ell=2}^{r+1}\,\tilde{g}_\ell \tilde{\Uplambda}_\ell^{\mathrm{SO}(2n)}\tilde{g}_\ell^t=0\,.
\end{align}
Here $\tilde{g}_\ell\in\tSO(2n-2n_1)$ and $\tilde{\Uplambda}_\ell^{\mathrm{SO}(2n)}:=\tilde{\Upsilon}_{\ell}\otimes \mathcal{J}_2$, where $\tilde{\Upsilon}_{\ell}$ ($\ell=2, \ldots, r+1$) are obtained from $\Upsilon_\ell$ by truncating the first entry of each matrix in~(\ref{SO general Upsilon}). Thus we may inductively repeat the argument to find that $(y_2^\perp)^\ast \subset y_3, y_4$, etc. At the end of the induction we arrive at the claim that $\mu^{-1}(0)$ is the flag manifold in the l.h.s. of~(\ref{embedSOgen}). 
\end{proof}

This result is easily extended to the case of $\mathrm{SO}(2n+1)$ and $\mathrm{Sp}(n)$ generalized flag manifolds~(\ref{SO 2n+1 flag}), (\ref{Sp 2n flag}). To formulate this extension, define the corresponding products of orbits
\begin{align}
    &\mathcal{N}^{\,\mathrm{SO}(2n+1)}=\mathsf{OGr}^{'}_{d_1}\times \mathsf{OGr}_{d_2}^{'}\times \cdots \times \mathsf{OGr}_{m}^{'}\times \mathsf{OGr}_{m}^{'}\,,\\
    &\mathcal{N}^{\,\mathrm{Sp}(n)}=\mathsf{SGr}_{d_1}\times \mathsf{SGr}_{d_2}\times \cdots \times \mathsf{SGr}_{m}\times \mathsf{SGr}_{m}\,,
\end{align}
where we take the Grassmannians of the respective groups.

\begin{prop}
    One has the isotropic embeddings
    \begin{align}
        &\mathcal{O}_{\Uplambda}^{\mathrm{SO}(2n+1)} \hookrightarrow \mathcal{N}^{\,\mathrm{SO}(2n+1)}\,,\\
        &\mathcal{O}_{\Uplambda}^{\mathrm{Sp}(n)} \hookrightarrow \mathcal{N}^{\,\mathrm{Sp}(n)}\,,
    \end{align}
    where the sets of $\Uplambda$'s defining the symplectic forms on $\mathcal{N}^{\,\mathrm{SO}(2n+1)}$ and $\mathcal{N}^{\,\mathrm{Sp}(n)}$ are as follows:
    \begin{align}
        &\Uplambda_\ell^{\mathrm{SO}(2n+1)}=\mathrm{Diag}(\Uplambda_\ell^{\mathrm{SO}(2n)}, 0)\,,\quad\quad \ell=1, \ldots, r+1\\
        & \Uplambda_\ell^{\mathrm{Sp}(n)}=\Upsilon_\ell \otimes \sigma_3\,,\quad\quad \ell=1, \ldots, r+1
    \end{align}
    where $\sigma_3=\begin{pmatrix}
        1& 0\\ 0& -1
    \end{pmatrix}$ and $\Upsilon_\ell$ are the matrices~(\ref{SO general Upsilon}). In both cases the embedded submanifolds are the level sets $\mu=0$ of the moment maps for the diagonal action of the symmetry groups. 
\end{prop}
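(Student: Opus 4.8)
The plan is to follow the proof of the preceding $\mathrm{SO}(2n)$ proposition essentially verbatim, checking only that each of its ingredients survives the passage to the odd orthogonal and the symplectic groups. In order, the things to verify are: (i) $\sum_{\ell=1}^{r+1}\Uplambda_\ell^{\mathrm{G}}=0$, so that Proposition~\ref{embeddingProp} applies and the embedding is isotropic; (ii) $\Uplambda^{\mathrm{G}}$ lies in the real span of the $\Uplambda_\ell^{\mathrm{G}}$, so that hypothesis~(\ref{Lambda1}) holds and the map~(\ref{orbitmap}) makes sense; (iii) $\mathrm{Stab}_{\Uplambda^{\mathrm{G}}}=\bigcap_\ell\mathrm{Stab}_{\Uplambda_\ell^{\mathrm{G}}}$, which upgrades the map to an embedding; and (iv) the level set $\mu^{-1}(0)$ of the diagonal moment map is exactly the orbit $\mathcal{O}_{\Uplambda}^{\mathrm{G}}$.

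For (i): in the $\mathrm{SO}(2n+1)$ case $\sum_\ell\Uplambda_\ell^{\mathrm{SO}(2n+1)}=\mathrm{Diag}\big(\sum_\ell\Uplambda_\ell^{\mathrm{SO}(2n)},0\big)=0$ by~(\ref{power2sum}); in the $\mathrm{Sp}(n)$ case $\sum_\ell\Uplambda_\ell^{\mathrm{Sp}(n)}=\big(\sum_\ell\Upsilon_\ell\big)\otimes\sigma_3=0$, again by~(\ref{power2sum}). For (ii)--(iii): after the harmless reordering that places the zero block last, the matrices $\Upsilon_1,\ldots,\Upsilon_r$ are diagonal, supported on the first $m=n_1+\cdots+n_r$ entries, constant on each $n_s$-block, and triangular with respect to the block filtration; hence they are linearly independent and span the space of all such block-constant diagonal matrices, so $\Lambda$ is a unique real combination of them, giving (ii). The joint zero/nonzero eigenspace decomposition of the $\Upsilon_\ell$ then cuts the stabilizer of the tuple down to $\prod_i\tU(n_i)$ times the subgroup fixing the complementary block, i.e.\ $\tSO(2n-2m+1)$ for $\mathrm{SO}(2n+1)$ and $\mathrm{Sp}(n-m)$ for $\mathrm{Sp}(n)$, which is precisely the denominator of $\mathcal{O}_{\Uplambda}^{\mathrm{G}}$; this gives (iii).

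For (iv) the argument is a copy of the $\mathrm{SO}(2n)$ proof: the only property of the summands $\mu_\ell$ used there is that $\mu_\ell$ equals the positive scalar $c_\ell:=2^{r-\ell}$ (with $c_{r+1}:=1$) times a difference of two orthogonal projectors, so $\mathrm{Spec}(\mu_\ell)\subset\{0,\pm c_\ell\}$ and $-c_\ell\le x^\dagger\mu_\ell x\le c_\ell$ for every unit vector $x$. This still holds: $\Uplambda_\ell^{\mathrm{SO}(2n+1)}$ differs from $\Uplambda_\ell^{\mathrm{SO}(2n)}$ only by an extra zero eigenvalue (a spectator direction that gets absorbed into the $\tSO(2n-2m+1)$-block at the end of the induction), and $\Upsilon_\ell\otimes\sigma_3$ has spectrum $\{0,\pm c_\ell\}$ because $\Upsilon_\ell$ does. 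One then takes an eigenvector $x$ of $\mu_1$ with eigenvalue $2^{r-1}$; the equation $\mu=0$ together with~(\ref{power2sum}) and the bounds forces $\mu_\ell x=-c_\ell x$ for all $\ell\ge2$; restricting to the orthogonal complement of $\mathrm{Span}(x,x^\ast)$, the truncated moment-map equation has the same shape with $r$ replaced by $r-1$, and one iterates. The conclusion is that the positive eigenspaces of the $\mu_\ell$ are mutually orthogonal and assemble into a flag of isotropic subspaces, i.e.\ a point of $\mathcal{O}_{\Uplambda}^{\mathrm{G}}$, which is the asserted identification $\mu^{-1}(0)\simeq\mathcal{O}_{\Uplambda}^{\mathrm{G}}$; the explicit map is the obvious odd-orthogonal and symplectic analogue of the one in the $\mathrm{SO}(2n)$ proposition, with isotropy and the conjugation $\ast$ taken with respect to the relevant bilinear form.

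The main obstacle I anticipate is the $\mathrm{Sp}(n)$ bookkeeping: one must write the moment map for the diagonal $\mathrm{Sp}(n)$-action with the correct conjugation (the symplectic adjoint, not the plain transpose used for $\mathrm{SO}$), so that the identity ``$\mu_\ell=c_\ell(P_\ell-P_\ell')$ with $P_\ell,P_\ell'$ orthogonal projectors onto $\mathrm{Sp}$-isotropic subspaces'' genuinely holds and the $x^\ast$ appearing in the iteration is the conjugate subspace with respect to the $\mathrm{Sp}$-structure. Once conventions are fixed this is routine, but it is the one place where the proof is not a literal transcription of the $\mathrm{SO}(2n)$ argument; the $\mathrm{SO}(2n+1)$ case is a straightforward extension in which the extra zero eigenvalue never interferes with the inductive peeling.
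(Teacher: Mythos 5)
Your proposal is correct and follows essentially the same route as the paper: the paper's proof is simply "exactly as before" for $\mathrm{SO}(2n+1)$, plus, for $\mathrm{Sp}(n)$, the explicit moment map $\mu_u^{\mathrm{Sp}(n)}=\sum_j\left(u_j\otimes u_j^\dagger+\omega\,u_j^\ast\otimes u_j^t\,\omega\right)$ with eigenvectors $\{u_i\}$, $\{\omega u_i^\ast\}$ of eigenvalue $\pm 1$, which is precisely the "difference of projectors / correct symplectic conjugation" point you flag as the one non-literal step. Your additional verification of the hypotheses of Proposition~\ref{embeddingProp} (sum zero, span, stabilizer intersection) is left implicit in the paper but is correct.
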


\begin{proof}
   In the $\mathrm{SO}(2n+1)$ the proof is exactly as before.
   
   In the symplectic case the only difference is in the explicit form of the moment map for each Grassmannian $\mathsf{SGr}_k$, which reads (for comparison with $\mathrm{SO}(2n)$ see~(\ref{muU}))
   \begin{align}
       \mu_u^{\mathrm{Sp}(n)}=\sum\limits_{j=1}^k\,\left(u_j\otimes u_j^\dagger+\omega\, u_j^\ast \otimes u_j^t \,\omega\right)\in \mathfrak{sp}_n\,,
   \end{align}
   where $\omega=\mathds{1}_n\otimes \mathcal{J}_2$ is the constant symplectic form\footnote{We only use that $\omega$ is real and satisfies $\omega^2=-\mathds{1}_{2n}$.} on $\CC^{2n}$. Here the vectors $u_j$ parametrize an isotropic $k$-plane, i.e. $u_i^t \omega u_j=0$, and are unit-normalized: $u_i^\dagger  u_j= \delta_{ij}$. It is easy to see that $\{ u_i\}$ and $\{\omega u_i^\ast\}$ are eigenvectors of $\mu_u^{\mathrm{Sp}(n)}$ with eigenvalues $\pm 1$.  The rest of the proof goes through, as before. 
\end{proof}

\subsection{Quantization.} Although in the present paper we focus on  classical theory, let us briefly pause to outline the meaning of  condition $\mu=0$ in the quantum case.

To start with, the moment map for the diagonal action of a group $\mathrm{G}$ on a product of orbits is clearly a sum of the respective moment maps on each orbit: $\mu=\sum_k\,\mu_k$. Let us additionally assume that the symplectic form $\Omega_k$ on each orbit has the property that $[\Omega_k]=c_1(\mathcal{L}_k)$, where $\mathcal{L}_k$ is a very ample holomorphic line bundle over this orbit. In this case the lore of geometric quantization says that the Hilbert space that arises upon quantizing such orbit is $W_k:=H^0(\mathcal{L}_k)$, the space of holomorphic sections of $\mathcal{L}_k$, which is at the same time an irreducible representation of $\mathrm{G}$. Denote by $\rho_k: \mathfrak{g}\mapsto \mathrm{End}(W_k)$ the corresponding representation of Lie algebras. In this case, upon quantization the moment map $\mu_k[a]$ corresponding to a Lie algebra element $a\in \mathfrak{g}$ maps to $\rho_k(a)$, the image of $a$ in the representation $W_k$. 

To summarize, upon quantization we replace the product of orbits $\prod_i\,\mathcal{O}_{\Uplambda_i}^{\mathrm{G}}$ by the tensor product of representations $\mathcal{W}:=\otimes_k W_k$, and the constraint $\mu=0$ may now be interpreted as a constraint specifying a subspace $\mathcal{W}_0\subset \mathcal{W}$:
\begin{align}
    \mathcal{W}_0:= \Bigl\{ \quad |w\rangle \in \mathcal{W}:\quad\quad \sum_k\,\rho_k(a)\,|w\rangle=0\quad\quad \text{for any}\quad a\in\mathfrak{g}\quad  \Bigr\}
\end{align}
In other words, $\mathcal{W}_0$ is the space of trivial representations (singlets) of $\mathfrak{g}$ inside $\mathcal{W}$. For example, the condition that $\mu^{-1}(0)$ be non-empty is a classical counterpart of the existence of at least one trivial representation in $\mathcal{W}$. The condition that $\mu^{-1}(0)$ be a single orbit should imply that there is exactly one trivial representation in $\mathcal{W}$. More generally, the symplectic quotient $\mu^{-1}(0)/\mathrm{G}$ describes the classical geometry of the space of such singlets, providing an effective description thereof in the semi-classical limit $p\to \infty$.

We leave a detailed analysis of the quantum theory for the future (cf.~\cite{Bykov_2024, BKK} for the $\mathrm{SU}(n)$ case though), here we just wish to stress that this setup has been thoroughly studied from a different perspective in~\cite{Klyachko, Fulton, Knutson}.

\section{Magnetic geodesics}
We will present a practical application of the above theory for the $\SU(n)$ group by finding magnetic geodesics on the respective flag manifolds.

Let us start by amplifying Proposition \ref{propCotangent}. What would change if we consider a generic symplectic form on $\prod_{i=0}^r \mathrm{Gr}(n_i, n)$? The latter has the form
\begin{gather}
    \Omega_{\text{M}} = i\,\mathrm{Tr}\left(\mathrm{d}\mathsf{Z}^{\dagger}\wedge\mathrm{d}\mathsf{Z}\right),
\end{gather}
where $\mathsf{Z} = \begin{pmatrix}
    \mathsf{Z}_0 & \mathsf{Z}_1 & \dots & \mathsf{Z}_r
\end{pmatrix}$ and $\mathsf{Z}^{\dagger}_i \mathsf{Z}_i = p_i \times \mathds{1}_{n_i}$. Now, we will follow the same procedure as in the proof of Proposition \ref{propCotangent}, that is, for generic $\mathsf{Z} \in \mathcal{X} = \{\,\mathrm{det}\left( \mathsf{Z} \right) \neq 0\,\}$, we pass to the polar decomposition $\mathsf{Z} = \mathsf{U} \mathsf{H}$. Defining $\mathsf{K} := \mathsf{H}^2$, we get
\begin{align}
    \Omega_{\text{M}}\big|_{\mathcal{X}} = i \, \mathrm{Tr} \left(\mathsf{P}\,\mathrm{d} \mathsf{U}^{\dagger} \wedge \mathrm{d} \mathsf{U}\right) + i\,\mathrm{d}\,\left(\sum\limits_{j\neq k} \mathsf{K}_{jk}\, \Bar{u}_j \mathrm{d}u_k\right)\equiv \Omega_{\text{M}}^0\,, \label{OmegaMagnetic}
    \\
    \text{where}\quad \mathsf{P} = 
    \begin{pmatrix}
        p_0 \mathds{1}_{n_0} & & &\\
        & p_1 \mathds{1}_{n_1} & &\\
        & & \ddots \\
        & & & p_r \mathds{1}_{n_r} \\
    \end{pmatrix}\,.
\end{align}
We should note that the shift $p_i \, \mapsto \,p_i + 1$ does not change $\Omega_{\mathrm{M}}\big|_{\mathcal{X}}$, because $\mathrm{Tr} \left(\mathrm{d} \mathsf{U}^{\dagger} \wedge \mathrm{d} \mathsf{U}\right) = 0$, which follows directly from Proposition \ref{lagrtheorem}. Without loss of generality we can assume $\mathrm{Tr}\left(\mathsf{P}\right)=0$, so that the first term in (\ref{OmegaMagnetic}) is a Kirillov-Kostant-Souriau symplectic form on the flag manifold $\mathcal{F}_{n_0,\dots,n_r}$. The entire expression for $\Omega_{\mathrm{M}}\big|_{\mathcal{X}}$ is a standard `twisted' symplectic form $\Omega_{\mathrm{M}}^{0}$ for the magnetic geodesic flow on $\mathrm{T}^\ast \mathcal{F}_{n_0,n_1,\dots,n_r}$~\cite{Efimov2005}. Therefore, we have proved the following: 
\begin{prop}\label{magneticProp}
    $\mathcal{X} \subset \Big(\prod_{i=0}^r \mathrm{Gr}(n_i, n), \Omega_{\mathrm{M}}\Big)$ is symplectomorphic to an open subset of $\Big(\mathrm{T}^\ast \mathcal{F}_{n_0,\dots,n_r},\Omega_{\mathrm{M}}^{0}\Big)$.
\end{prop}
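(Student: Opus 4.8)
The plan is to repeat the proof of Proposition~\ref{propCotangent} almost verbatim; the one new feature is that the term which vanished there, thanks to Proposition~\ref{lagrtheorem}, now survives and manifests itself as a magnetic field on the base $\mathcal{F}_{n_0,\dots,n_r}$. On the open set $\mathcal{X}$ the matrix $\mathsf{Z}$ is non-degenerate, so the polar decomposition $\mathsf{Z}=\mathsf{U}\mathsf{H}$ is available and, exactly as in Proposition~\ref{propCotangent}, it provides a diffeomorphism $\Phi$ of $\mathcal{X}$ onto an open subset of $\mathrm{T}^\ast\mathcal{F}_{n_0,\dots,n_r}$: here $\mathsf{U}$, modulo the residual $\mathrm{S}(\mathrm{U}(n_0)\times\cdots\times\mathrm{U}(n_r))$ gauge freedom, plays the role of the base point, while the off-(block-)diagonal entries of $\mathsf{K}=\mathsf{H}^2=\mathsf{Z}^\dagger\mathsf{Z}$ — whose block-diagonal part is pinned to the constant matrix $\mathsf{P}$ — play the role of the cotangent-fibre coordinates; the image is the open subset where $\mathsf{K}$ is positive-definite (and it grows as the $p_i$ increase).

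Next I would substitute the polar decomposition into $\Omega_{\mathrm{M}}=i\,\mathrm{Tr}(\mathrm{d}\mathsf{Z}^\dagger\wedge\mathrm{d}\mathsf{Z})$. This is precisely the computation already recorded in~(\ref{OmegaMagnetic}); concretely it amounts to the identity $\Omega_{\mathrm{M}}\big|_{\mathcal{X}}=i\,\mathrm{d}\,\mathrm{Tr}\bigl(\mathsf{K}\,\mathsf{U}^\dagger\mathrm{d}\mathsf{U}\bigr)$ (valid because $\mathrm{Tr}\,\mathsf{K}=\mathrm{Tr}\,\mathsf{P}$ is constant), followed by splitting $\mathsf{K}$ into its block-diagonal part $\mathsf{P}$ and its off-block-diagonal remainder. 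Applying $i\,\mathrm{d}$ to the two pieces reproduces the two terms of~(\ref{OmegaMagnetic}): the remainder yields $i\,\mathrm{d}\bigl(\sum_{j\neq k}\mathsf{K}_{jk}\,\bar u_j\mathrm{d}u_k\bigr)=\mathrm{d}\theta_{\mathrm{can}}$, the differential of the tautological one-form $\theta_{\mathrm{can}}$ on $\mathrm{T}^\ast\mathcal{F}_{n_0,\dots,n_r}$ in the chart supplied by $\Phi$ (this step is identical to Proposition~\ref{propCotangent}).

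It then remains to identify the first term $i\,\mathrm{Tr}(\mathsf{P}\,\mathrm{d}\mathsf{U}^\dagger\wedge\mathrm{d}\mathsf{U})$. Since replacing $\mathsf{P}$ by $\mathsf{P}-\tfrac{1}{n}(\mathrm{Tr}\,\mathsf{P})\,\mathds{1}$ only subtracts a multiple of $\mathrm{Tr}(\mathrm{d}\mathsf{U}^\dagger\wedge\mathrm{d}\mathsf{U})$, which vanishes by Proposition~\ref{lagrtheorem} (it is the sum of Fubini--Study forms restricted to the Lagrangian $\mathcal{F}$), we may assume $\mathrm{Tr}\,\mathsf{P}=0$. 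Writing $\theta=\mathsf{U}^\dagger\mathrm{d}\mathsf{U}$ for the Maurer--Cartan form one has $i\,\mathrm{Tr}(\mathsf{P}\,\mathrm{d}\mathsf{U}^\dagger\wedge\mathrm{d}\mathsf{U})=-i\,\mathrm{Tr}(\mathsf{P}\,\theta\wedge\theta)$, which is invariant under the residual gauge group (which commutes with the block-scalar matrix $\mathsf{P}$) and horizontal for the fibration $\pi:\mathrm{T}^\ast\mathcal{F}\to\mathcal{F}$ — contracting with a vertical vector $X\in\mathfrak{stab}(\mathsf{P})$ gives $\mathrm{Tr}([\mathsf{P},X]\theta)=0$. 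Hence it descends to a closed $\mathrm{SU}(n)$-invariant two-form $\beta$ on $\mathcal{F}_{n_0,\dots,n_r}$, namely the Kirillov--Kostant--Souriau form of the $\mathrm{SU}(n)$-orbit of $\mathsf{P}$ (a symplectic form when the $p_i$ are pairwise distinct, and otherwise the pull-back of a symplectic form from a coarser flag), and $i\,\mathrm{Tr}(\mathsf{P}\,\mathrm{d}\mathsf{U}^\dagger\wedge\mathrm{d}\mathsf{U})=\pi^\ast\beta$ because it depends on $\mathsf{Z}$ only through the base point. Collecting terms, $\Phi^\ast\Omega_{\mathrm{M}}^0=\Omega_{\mathrm{M}}\big|_{\mathcal{X}}$ with $\Omega_{\mathrm{M}}^0:=\pi^\ast\beta+\mathrm{d}\theta_{\mathrm{can}}$, which is the standard twisted symplectic form of the magnetic geodesic flow~\cite{Efimov2005}; it is closed since $\beta$ is, and non-degenerate because adding the horizontal two-form $\pi^\ast\beta$ to $\mathrm{d}\theta_{\mathrm{can}}$ does not affect the pairing between base and fibre directions. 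Thus $\Phi$ is the asserted symplectomorphism onto an open subset of $\bigl(\mathrm{T}^\ast\mathcal{F}_{n_0,\dots,n_r},\Omega_{\mathrm{M}}^0\bigr)$.

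The part that requires care is the last step: verifying that $i\,\mathrm{Tr}(\mathsf{P}\,\mathrm{d}\mathsf{U}^\dagger\wedge\mathrm{d}\mathsf{U})$ is genuinely basic for $\pi$ — gauge-invariant and horizontal, so that it descends to the base and legitimately plays the role of a magnetic field — and that the resulting $\Omega_{\mathrm{M}}^0$ is non-degenerate, so that one obtains an honest symplectomorphism rather than merely an equality of pulled-back two-forms. Everything else, in particular the passage from $\Omega_{\mathrm{M}}$ to~(\ref{OmegaMagnetic}) via the polar decomposition, has already been carried out in the text and only needs to be invoked.
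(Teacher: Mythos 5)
Your proposal is correct and follows essentially the same route as the paper: the polar decomposition $\mathsf{Z}=\mathsf{U}\mathsf{H}$ on $\mathcal{X}$, the splitting of $\Omega_{\mathrm{M}}\big|_{\mathcal{X}}$ as in~(\ref{OmegaMagnetic}) into a Kirillov--Kostant--Souriau `magnetic' term on $\mathcal{F}_{n_0,\dots,n_r}$ plus the canonical cotangent-bundle form, with $\mathrm{Tr}\left(\mathrm{d}\mathsf{U}^{\dagger}\wedge\mathrm{d}\mathsf{U}\right)=0$ from Proposition~\ref{lagrtheorem} allowing the reduction to $\mathrm{Tr}\left(\mathsf{P}\right)=0$. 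The additional checks you spell out (that the magnetic term is basic, i.e.\ descends from the $\mathsf{U}$-variables to the flag manifold, and that the resulting twisted form is non-degenerate) are points the paper asserts by appeal to the standard theory of twisted symplectic structures rather than proves.
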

The numbers $q_i := p_i - p_{i-1}$ for $i=1,\dots,r$ turn out to be magnetic charges. As in the non-magnetic case, the size of the subset of $\mathrm{T}^\ast \mathcal{F}_{n_0,\dots,n_r}$ is determined by the value of $\min\left(p_i\right)$. 

Now, we turn to the study of magnetic geodesics. Proposition \ref{magneticProp} suggests that we can try to find a Hamiltonian $\mathrm{H}$ on $\prod_{i=0}^r \mathrm{Gr}(n_i, n)$, which induces the Hamiltonian of geodesic flow on $\mathrm{T}^\ast \mathcal{F}_{n_0,\dots,n_r}$. Then, studying its dynamics on $\prod_{i=0}^r \mathrm{Gr}(n_i, n)$, we can find magnetic geodesics\footnote{With restricted momenta due to the constraint that $\mathsf{K}$ be positive-definite.} on $\mathcal{F}_{n_0,\dots,n_r}$. For simplicity, we will restrict to the case of the complete flag manifold
\begin{align}
    \mathcal{F}_{n}:=\mathcal{F}_{1,1,\dots,1}=\frac{\mathrm{SU}(n)}{\mathrm{S}\left(\mathrm{U}(1)^{\times n}\right)}\,,
\end{align}
however the same technique can be applied to the general case of $\mathcal{F}_{n_0,\dots,n_r}$. 

We start with the following action for a dynamical system on ${\prod_{i=0}^{n-1} \mathrm{Gr}(1, n) = \left(\CP^{n-1}\right)^{\times n}}$:
\begin{gather}
    \mathcal{S} =  \int^{\tau}_{0}\,\mathrm{d}t\,\left(i\,\mathrm{Tr}\left(\mathsf{Z}^{\dagger} \dot{\mathsf{Z}}\right) - \sum_{i<j} \alpha_{ij} |\Bar{z}_i z_j|^2\right)\,, \label{ActionSpin}
\end{gather}
where $z_i$'s are columns of the matrix $\mathsf{Z}$ satisfying the normalization conditions $\Bar{z}_i z_i := p_i$. As it follows from (\ref{ActionSpin}) $\sum_{i<j} \alpha_{ij} |\Bar{z}_iz_j|^2$ plays role of Hamiltonian of the system. In physics literature the above system is referred to as a `classical spin chain' \cite{Bykov_2024}. 

To proceed, we again use the polar decomposition $\mathsf{Z} = \mathsf{U}\mathsf{H}$ for generic $\mathsf{Z}$ to cast the action in the form\footnote{The term $\mathrm{Tr}(\dot{\mathsf{K}})=0$, due to normalization conditions on $z_i$'s.}
\begin{align}
    \mathcal{S} =  \int^{\tau}_{0}\,\mathrm{d}t\,\left(i\,\mathrm{Tr}\left(\mathsf{P}\mathsf{U}^{\dagger} \dot{\mathsf{U}}\right) + \sum\limits_{j\neq k} \mathsf{K}_{jk}\, \Bar{u}_j \dot{u}_k - \sum_{i<j} \alpha_{ij} \mathsf{K}_{ij}\mathsf{K}_{ji}\right)\,.
\end{align}
From the last expression it is clear that $\mathsf{K}_{ij}$'s are momenta of the system and the Hamiltonian $\sum_{i<j} \alpha_{ij} \mathsf{K}_{ij}\mathsf{K}_{ji}$ is simply the geodesic Hamiltonian. Indeed, solving classical equations of motion on $\mathsf{K}_{ij}$'s, we end up with
\begin{align}\label{finalAction}
    \mathcal{S} =  \int^{\tau}_{0}\,\mathrm{d}t\,\left(i\,\mathrm{Tr}\left(\mathsf{P}\mathsf{U}^{\dagger} \dot{\mathsf{U}}\right) + \sum_{i<j} \frac{1}{\alpha_{ij}} \Big(\dot{\Bar{u}}_i u_j\Big) \Big(\Bar{u}_j \dot{u}_i\Big)\right)\,.
\end{align}
This is an action for magnetic geodesics on $\mathcal{F}_{n}$, equipped with the metric\footnote{Metrics on flag manifolds have been studied in~\cite{AlePer86, Arvanitoyeorgos_1993}, see also~\cite{AffleckReview}.}
\begin{align}
    \mathrm{d}s^2 = \sum_{i<j} \frac{1}{\alpha_{ij}} \Big(\mathrm{d}\Bar{u}_i u_j \Big) \Big( \Bar{u}_j \mathrm{d}u_i \Big).    
\end{align} 
Recall however that $\mathsf{K}_{ij}$'s are not arbitrary due to the positive-definiteness of $\mathsf{K}$ and the derivation works in a some open subset of $\mathrm{T}^\ast \mathcal{F}_{n}$. 

Now we turn to some examples:

\subsection{
    Magnetic geodesics on $\CP^1$.}

We start with the e.o.m. for the system (\ref{ActionSpin}) on ${\CP^1 \times \CP^1}$
\begin{align}\label{magnetGeodSphere}
    i\, \dot{z}_1 - \alpha_{12}\Bar{a}\, z_2 +\lambda_1 z_1 = 0,\\
    i\, \dot{z}_2 - \alpha_{12} a\, z_1 +\lambda_2 z_2 = 0,\nonumber 
\end{align}
where $a := \Bar{z}_1 z_2$ and $\lambda_1, \lambda_2$ are Lagrange multipliers imposing the constraints $\Bar{z}_i z_i = p_i$, which generate gauge transformations
\begin{align}
    z_1 \mapsto e^{i\varphi_1(t)} z_1\,, \quad\quad z_2 \mapsto e^{i\varphi_2(t)} z_2\,,\label{gaugeTransfSU2}
\end{align}
where $\varphi_1(t), \varphi_2(t)$ are arbitrary real functions. At the same time $\lambda_1, \lambda_2$ play the role of gauge fields and under (\ref{gaugeTransfSU2}) they transform as
\begin{align}
    \lambda_1 \mapsto \lambda_1 + \dot{\varphi}_1\,, \quad \quad \lambda_2 \mapsto \lambda_2 + \dot{\varphi}_2
\end{align}
We note that $|a|^2 = \mathrm{const}$. One can check that $a = \mathrm{const}$ if one chooses the gauge 
\begin{align}
    \lambda_1 = - \alpha_{12} p_1\,,\quad\quad
    \lambda_2 = - \alpha_{12} p_2\,.
\end{align}
It is now easy to find the solution to (\ref{magnetGeodSphere}):
\begin{align} \label{geodCP1CP1}
    \mathsf{Z}(t) := \begin{pmatrix}
        z_1 & z_2
    \end{pmatrix} = \mathsf{Z}(0) \times \exp\left[-i\,\alpha_{12}
    \begin{pmatrix}
        p_1 & a\\
        \Bar{a} & p_2
    \end{pmatrix}t\right]\,.
\end{align}
We now map the non-degenerate $\mathsf{Z}$ (assuming $\mathsf{Z}(0)$ non-degenerate) to the unitary matrix $\mathsf{U} = \mathsf{Z} \left(\mathsf{Z}^{\dagger}\mathsf{Z}\right)^{-1/2}$
\begin{align} \label{geodSphere}
    \mathsf{U}(t) = \mathsf{U}(0) \times \exp\left[-i\,\alpha_{12}
    \begin{pmatrix}
        p_1 & a\\
        \Bar{a} & p_2
    \end{pmatrix}t\right]\,.
\end{align}
The only difference between (\ref{geodCP1CP1}) and (\ref{geodSphere}) is the substitution $\mathsf{Z}(0) \mapsto \mathsf{U}(0)$. In~(\ref{geodSphere}) one recognizes the well-known magnetic geodesic\footnote{Magnetic geodesics on a sphere were widely studied in the literature, cf.~\cite{Kordyukov2020}. The general answer for magnetic geodesics on homogeneous spaces with normal metrics can be found in~\cite{Bolsinov2006}.} on the sphere $\mathcal{S}^2 = \CP^1$ with the usual normal metric $\mathrm{d}s^2 = 1/\alpha_{12}\,\Big(\mathrm{d}\Bar{u}_1 u_2\Big) \Big(\Bar{u}_2 \mathrm{d}u_1\Big)$. One can also explicitly check that~(\ref{geodSphere}) solves the e.o.m. following from (\ref{finalAction}). However, as has been previously noted, the parameter $a$ in (\ref{geodSphere}) is not arbitrary, satisfying the condition $|a|^2 < p_1 p_2$. Nevertheless, it does not spoil the conclusion. Indeed, if we shift $p_i \mapsto p_i + 1$, the solution is multiplied by $e^{-i\alpha_{12}t} \times \mathds{1}_2$, 
which is simply a gauge transformation. The observation is also compatible with Proposition \ref{magneticProp}: the considered change of $p_i$'s leaves $\Omega_{\text{M}}^0$ intact due to $i\,\mathrm{Tr}\left(\mathrm{d}\mathsf{U}^\dagger \wedge \mathrm{d}\mathsf{U}\right) = 0$. Thus, every magnetic geodesic on $\CP^1$ can be found as a solution to the variational problem for the `spin system' (\ref{ActionSpin}) on $\CP^1\times \CP^1$ for sufficiently large $p_1,p_2$ with fixed $q=p_2-p_1$.

\subsection{
    Magnetic geodesics on $\mathcal{F}_3$ and $\mathcal{F}_n$.}

We start with the flag manifold $\mathcal{F}_3$ and the e.o.m. for the system (\ref{ActionSpin}) on $\CP^2\times \CP^2 \times \CP^2$
\begin{align}
    i\, \dot{z}_1 - \alpha_{12}\Bar{a}\, z_2 - \alpha_{13}\Bar{b}\,z_3 + \lambda_1 z_1 = 0,\nonumber\\
    i\, \dot{z}_2 - \alpha_{12} a\, z_1 - \alpha_{23} \Bar{c}\,z_3 + \lambda_2 z_2 = 0,\label{CP2systemEquat}\\
    i\, \dot{z}_3 - \alpha_{13} b\,z_1 - \alpha_{23} c\, z_2 + \lambda_3 z_3 = 0,\nonumber 
\end{align}
where $a = \Bar{z}_1 z_2,\,b=\Bar{z}_1 z_3,\,c=\Bar{z}_2 z_3$ and the constraints $\Bar{z}_i z_i = p_i$ are implied. As we learned above, this system corresponds to the following metric on $\mathcal{F}_3$:
\begin{align}
    \mathrm{d}s^2 = \frac{1}{\alpha_{12}} \Big(\mathrm{d} \Bar{u}_1 u_2 \Big)\Big(\Bar{u}_2 \mathrm{d} u_1\Big) + \frac{1}{\alpha_{23}} \Big(\mathrm{d} \Bar{u}_2 u_3\Big) \Big(\Bar{u}_3 \mathrm{d} u_2\Big) + \frac{1}{\alpha_{13}} \Big(\mathrm{d} \Bar{u}_1 u_3\Big) \Big(\Bar{u}_3 \mathrm{d} u_1\Big)\,. 
\end{align}

The system of equations (\ref{CP2systemEquat}) is a complex system of nonlinear equations. However, in the non-magnetic case, an explicit solution to (\ref{CP2systemEquat}) can be found when $\alpha_{13} = \alpha_{23}$ \cite{Souris,Bykov_2024}. Our goal here is to extend this to the magnetic case.

When $\alpha_{13} = \alpha_{23}$, the equations on $a,b,c$ read:
\begin{align}
    &i\,\dot{a} - \alpha_{12} (p_1-p_2) a - (\lambda_1 - \lambda_2) a = 0\,,\nonumber\\
    &i\,\dot{b} - \alpha_{13} (p_1-p_3) b - \left(\alpha_{13} - \alpha_{12}\right)ac - (\lambda_1 - \lambda_3) b = 0\,,\\
    &i\,\dot{c} - \alpha_{13} (p_2-p_3) c - \left(\alpha_{13} - \alpha_{12}\right)\Bar{a}b - (\lambda_2 - \lambda_3) c = 0\,.\nonumber
\end{align}
As in the $\SU(2)$ case above one can fix $\lambda_1 = -\alpha_{12} p_1,\,\lambda_2 = -\alpha_{12}p_2,\,\lambda_3 = - \alpha_{13} p_3 $, using the gauge transformation $z_j \mapsto e^{i\varphi_j(t)}z_j$, generated by the constraints on $z_j$'s. Then the equation on $a$ takes the especially simple form $\dot{a} = 0$, so that $a = a_0 = \mathrm{const}$. We also explicitly solve the equations on $b,c$:
\begin{align}
    \begin{pmatrix}
        b \\
        c
    \end{pmatrix}(t) = \exp 
    \left[
    -i\,\left(\alpha_{13} - \alpha_{12}\right)
    \begin{pmatrix}
        p_1 & a_0 \\
        \Bar{a}_0 & p_2
    \end{pmatrix}t
    \right]
    \times
    \begin{pmatrix}
        b_0 \\
        c_0
    \end{pmatrix} = \mathrm{G}(t) \times
    \begin{pmatrix}
        b_0 \\
        c_0
    \end{pmatrix}\,.
\end{align}
We will now return to the original equations (\ref{CP2systemEquat}) and solve them. To this end, let us rewrite the equations in terms of the matrix $\mathsf{Z}:=\begin{pmatrix}
    z_1 & z_2 & z_3
\end{pmatrix}$:
\begin{align}
    i\,\dot{\mathsf{Z}} = \mathsf{Z} \times 
    \begin{pmatrix}
        \alpha_{12}p_1 & \alpha_{12}a & \alpha_{13}b \\
        \alpha_{12}\Bar{a} & \alpha_{12}p_2 & \alpha_{13}c \\
        \alpha_{13}\Bar{b} & \alpha_{13}\Bar{c} & \alpha_{13}p_3
    \end{pmatrix}\,.\label{F123geodEq}
\end{align}
To solve (\ref{F123geodEq}), consider the matrix 
\begin{align}
    \mathsf{Y}(t) = \mathsf{Z}(t) \times \begin{pmatrix}
        \mathrm{G}(t) & \\
        & 1
    \end{pmatrix}\,.
\end{align}
The equation on $\mathsf{Y}$ is as follows:
\begin{align}
    i\,\dot{\mathsf{Y}} = \mathsf{Y} \times \alpha_{13}
    \begin{pmatrix}
        p_1 & a_0 & b_0 \\
        \Bar{a}_0 & p_2 & c_0 \\
        \Bar{b}_0 & \Bar{c}_0 & p_3
    \end{pmatrix}\,,
\end{align}
which is an equation with constant coefficients. Solving it, we find
\begin{align}
    &\label{spinchainF3}\mathsf{Z}(t) =\mathsf{Z}(0) \times \mathrm{Ev}(t)\,,\quad \text{where}\,\, \\
    &\mathrm{Ev}(t) := \exp\left[-i\alpha_{13}\begin{pmatrix}
        p_1 & a_0 & b_0 \\
        \Bar{a}_0 & p_2 & c_0 \\
        \Bar{b}_0 & \Bar{c}_0 & p_3
    \end{pmatrix} t \right]\times 
    \begin{pmatrix}
        \exp 
        \left[
        i\,\left(\alpha_{13} - \alpha_{12}\right)
        \begin{pmatrix}
            p_1 & a_0 \\
            \Bar{a}_0 & p_2
        \end{pmatrix}t
        \right] & \\
        & 1
    \end{pmatrix}\,.\nonumber
\end{align}
As in the case of $\CP^1$, for non-degenerate $\mathsf{Z}$ one can construct the unitary matrix $\mathsf{U}=\mathsf{Z}\left(\mathsf{Z}^{\dagger}\mathsf{Z}\right)^{-1/2}$ and interpret it as a magnetic geodesic on $\mathcal{F}_3$
\begin{align}
    \mathsf{U}(t) = \mathsf{U}(0)\times \mathrm{Ev}(t)\,, \label{geodF123}
\end{align}
which coincides with the result of the work \cite{ArvanitoyergosSouris}. We also observe that under ${p_i \mapsto p_i + 1}$ the mere change in (\ref{geodF123})  is by a gauge transformation.

One can now easily generalize to the case of $\mathcal{F}_n$ by considering the metric
\begin{align}
    \mathrm{d}s^2 = \sum\limits_{k=2}^n \frac{1}{\alpha_k} \sum\limits_{j=1}^{k-1} \Big(\mathrm{d}\Bar{u}_k u_j \Big) \Big(\Bar{u}_j \mathrm{d}u_k\Big)\,,
\end{align}
where $u_k$'s are columns of the matrix $\mathsf{U}$ parameterizing $\mathcal{F}_n$. The corresponding system on $\left(\CP^{n-1}\right)^{\times n}$ is defined by (\ref{ActionSpin}) with $\alpha_{ij} := \alpha_{\mathrm{max}(i,j)}$ and the constraints $\Bar{z}_i z_i = p_i$. The e.o.m. for the system take the form
\begin{align}
    i\,\dot{z}_a - \sum\limits_{i<a}\alpha_a \left(\Bar{z}_i z_a\right) z_i - \sum\limits_{a < i}\alpha_i \left(\Bar{z}_i z_a\right) z_i + \lambda_a z_a = 0\,,
\end{align}
where $a = 1,2,\dots,n$ and $\alpha_1 := \alpha_2$. We fix the gauge by choosing $\lambda_a = -\alpha_a p_a$ and define the matrices 
\begin{align}
    \left(\mathcal{A}\right)_{ab} := \Bar{z}_a z_b,\quad \left(\mathcal{B}\right)_{ab} := \alpha_{\mathrm{max}(a,b)} \,\Bar{z}_a z_b\,.
\end{align}
The equations on $\Bar{z}_a z_b$'s can be written in the compact form
\begin{align}\label{magnCPn-1}
    i\,\dot{\mathcal{A}} + \left[\mathcal{B},\mathcal{A}\right] = 0\,.
\end{align}
Equations of the same type were studied in \cite{Bykov_2024} and the same analysis can be applied to the present situation without any technical difficulties. As a result one solves the e.o.m. and, consequently, finds geodesics using the above reasoning. We present an answer for $\mathsf{Z}(t)$ and the magnetic geodesics on $\mathcal{F}_n$ in a uniform way via the evolutionary operator $\mathrm{Ev}(t)$, which defines their dynamics as in (\ref{spinchainF3}) and (\ref{geodF123}):
\begin{align}
    \mathrm{Ev}(t) = \overleftarrow{\prod\limits_{k=2}^{n}} \exp\left[i\left(\alpha_{k+1} - \alpha_{k}\right)t\,\mathrm{Pr}_{k}\left(\mathcal{A}^0\right)\right]\,,
\end{align}
where $\alpha_{n+1} := 0$, $\mathcal{A}^0 := \mathcal{A}(0)$, $\mathrm{Pr}_k$ is the projector on the upper left  $(k\times k)$ block of the matrix and the arrow means multiplication of the matrices from right to left\footnote{For example, $\overleftarrow{\prod\limits_{k=1}^{n}}\mathcal{C}_k = \mathcal{C}_n\dots\mathcal{C}_2\,\mathcal{C}_1$.}. 

We have discussed a special family of metrics and found their magnetic geodesics on complete flag manifolds. However, we expect that similar considerations can be carried out for general metrics with `nested structure' on arbitrary flag manifolds of $\mathrm{SU}(n)$ (see \cite{Bykov_2024} for the definition).

\vspace{0.5cm}
\textbf{Acknowledgments.} Sections 1-3 were written with the support of the Foundation for the Advancement of Theoretical Physics and Mathematics ``BASIS''. Sections 4-6 were supported by the Russian Science Foundation grant № 20-72-10144 (\href{https://rscf.ru/en/project/20-72-10144/}{\emph{https://rscf.ru/en/project/20-72-10144/}}). We would like to thank S.~Gorchinskiy, I.~Taimanov and N.~Tyurin for inspiring discussions.

\vspace{1cm}
\appendix
\appendixbig

\setcounter{section}{10}
\newcounter{appcounter}
\setcounter{appcounter}{1}
\renewcommand{\thesection}{\Alph{appcounter}}

\section{
    Regular values of the moment map.}\label{regularappendix}

A well-known statement asserts that if $0$ is a \emph{regular}
 value of the moment map, then $\mu^{-1}(0)$ is Lagrangian \emph{if and only if} it is a $\mathrm{G}$-orbit~\cite[Lemma 2.4.2]{Audin}. Our goal here is to show how this principle can break down whenever zero is not a regular value.

 As the simplest example, consider the manifold $\mathcal{M}_3:=\CP^1 \times \CP^1 \times \CP^1$ with the symplectic form
 \bea
\Omega=\alpha\, \Omega_1+\beta\, \Omega_2+\gamma\,  \Omega_3\,,
 \eea
 where $\alpha, \beta, \gamma>0$ and $\Omega_{1,2,3}$ are copies of the Fubini-Study form on the three spheres. The moment map for the action of $\tSO(3)$ may be thought of as a vector in $\mathbb{R}^3$:
 \begin{align}
\vec{\mu}=\alpha\,\vec{n}_1+\beta\,\vec{n}_2+\gamma\,\vec{n}_3\,.
 \end{align}
 The set $\vec{\mu}=0$ defines a triangle in $\mathbb{R}^3$ with sides of length $\alpha, \beta, \gamma$, which exists whenever the triangle inequalities on $\alpha, \beta, \gamma$ are satisfied. If these values are generic (i.e., none of the inequalities is saturated), $0$ is the regular value of $\vec{\mu}$, and $\vec{\mu}^{-1}(0)\simeq \tSO(3)$. It is clearly a Lagrangian submanifold of $\mathcal{M}_3$. However in the exceptional case when one of the triangle inequalities is saturated, say $\gamma=\alpha+\beta$, equation $\vec{\mu}=0$ implies that $\vec{n}_1=\vec{n}_2=-\vec{n}_3$, so that $\vec{\mu}^{-1}(0)\simeq \CP^1$ is only isotropic.

\vspace{1cm}    
    \setstretch{0.8}
    \setlength\bibitemsep{5pt}
    \printbibliography
\end{document}